\newtheorem{theorem}{Theorem}[section]
\newtheorem{lemma}[theorem]{Lemma}
\newtheorem{corollary}[theorem]{Corollary}
\newtheorem{definition}[theorem]{Definition}
\newtheorem{fact}[theorem]{Fact}
\theoremstyle{remark}
\newtheorem{remark}[theorem]{Remark}
\DeclareMathOperator{\Stab}{Stab}
\title{Galois Groups of Generic Polynomials}
\author{Igor Rivin}
\address{School of Mathematics, University of St Andrews, St Andrews, Fife, and Mathematics Department, Temple University, Philadelphia}
\thanks{The author would like to thank Benedict Gross for bringing this question to his attention in 1982, when the author was taking Gross' algebraic number theory course at Princeton (Gross mentioned that in his experiments with Zagier, almost every polynomial had Galois group $S_n,$ and of those that did not, almost all had Galois group $A_n,$ and of those that did not, almost all had some Matthieu group - in complete agreement with the spirit of this paper). More recently, the author would like to thank Nick Katz for helpful conversations, and Vesselin Dimitrov for telling the author about the results on discriminants to be found in \cite{gkz}. The author had introduced the polynomials $p_k$ for the purpose of checking that the Galois group was large in his preprint \cite{rivin2013large}. The author would like to thank Peter Sarnak for his encouragement, and Nick Katz and the MathOverflow community for helpful discussions. }
\keywords{Galois groups, generic polynomial}
\email{igor.rivin@st-andrews.ac.uk}
\subjclass{11R45; 11C08; 11R32}
\date{\today}
\begin{document}
\maketitle

\begin{abstract}
We show that the Galois group of a random monic polynomial
%of degree $d>12$
 with integer coefficients between $-N$ and $N$ is \emph{not} 
%one of 
$S_d$
%or $A_d$ 
with probability $\ll  \frac{\log^{\Omega(d)}N}{N}.$  Conditionally on \emph{not} being 
%one of these ``large'' groups,
the full symmetric group,
we have a hierarchy of possibilities each of which has polylog probability of occurring. These results also apply to random polynomials with only a subset of the coefficients allowed to vary. This settles a question going back to 1936.
\end{abstract}

\section*{Introduction}
It is a classic 1936 result of B.~L.~van~der~Waerden \cite{vdW} that a random monic polynomial of degree $d$ with integer coefficients of height bounded by $N$ has Galois group $S_n$ with probability at least $1 - O(N^{-1/6}).$ Van~der~Waerden used Jordan's theorem (which states that a proper subgroup of a finite group cannot meet every conjugacy class), together with a sieve argument. It is clear that the probability of smaller groups cannot be smaller than $1/2N$ (since that is the probability of the constant term being $0,$ in which case the polynomial is reducible), but while it was widely believed that the probability that a group is not the symmetric group should be of order $O(N^{-1 + \epsilon}),$ for any $\epsilon,$ progress has been slow. In the late 1960s, P.~X~Gallagher (using the large sieve), improved the bound to $O(1/\sqrt{N}),$ and another forty years later, R.~Dietmann (\cite{dietgal}) improved the bound to $O(1/N^{2-\sqrt{2}}),$ still quite far from the conjectured truth. In this note, we show that the probability that the Galois group is 
%neither the symmetric nor the alternating group 
not the symmetric
is bounded above by $\frac{\log^{f(d)} N}{N} ,$ for some (effectively computable) function $f(d).$  In fact, the same result holds if we hold some of the coefficients of the polynomial fixed, while the others are uniform between $-N$ and $N.$ 

The tools used in the paper consist of elementary diophantine geometry (the Lang-Weil estimate), the classification of finite simple groups (this is actually only used to get better control on the power of the log that occurs in the estimate, and, perhaps most importantly, S.~D.~Cohen's result on Galois groups of polynomials with restricted coefficients (there is an alternative proof of some of these results due to the author).

The plan of the paper is as follows.

In Section \ref{algpre} we introduce some of the algebraic geometric machinery we need, in Section \ref{polysec}, some of the results on polynomials, and in Section \ref{groups} some of the group theoretic results.

In Section \ref{irred} we introduce our method to show that the probability that a random monic polynomial with integer coefficients between $-N$ and $N$ is reducible is bounded above by $O(\log N/N).$

In Section \ref{galgps} we show, under the assumption that the degree of the polynomial is at least $12,$ that the probability that the Galois group of a random polynomial is other than the symmetric group or the alternating group, is bounded above by $O\left(\frac{\log^{f(d)}N}{N}\right),$ where $f(d)$ is a polynomially growing function of the degree (Theorem \ref{mainthm}.) The argument uses the classification of finite simple groups.

In Section \ref{alt} we introduce a different resolvent polynomial, which allows us to strengthen Theorem \ref{mainthm} to eliminate the alternating group. The method in Section \ref{alt} can be used to prove our main result (Theorem \ref{cohthm}) without reference to the Classification of Finite Simple Groups \emph{and} without restricting to degree at least $12,$ but at the cost of getting a bound of the sort $O\left(\frac{\log^{f(d)}N}{N}\right),$ but now with $f(d)$ growing as large as $d!,$ which seems like a a high price to pay. See Section \ref{shortcut} for more.

In Section \ref{cohsec} we state our main result (Theorem \ref{cohthm}), while in Section \ref{truth} we speculate on what the actual truth is.

% Using results of J.~Pila and a little algebraic geometry, we succeed in improving our result to an \emph{asymptotic} result, which indicates that (at least asymptotically), the dominant contribution to the set of polynomials with "small" Galois group comes precisely from the polynomials with vanishing constant term.

\section{Algebraic preliminaries}
\label{algpre}
(We steal our statement of the Lang-Weil bound \cite[Lemma 1]{LangWeil} from Terry Tao's blog post \url{https://terrytao.wordpress.com/2012/08/31/the-lang-weil-bound/}, published as \cite[Section 2.2]{tao2015expansion}).
Let $F$ be a finite field, with algebraic closure $\overline{F},$ and let $V$ be an (affine) algebraic variety
\[
V = \{x\in \overline{F}^d \left| P_1(x) = P_2(x) = \cdots = P_k(x) = 0\right.
\},
\]
for some finite set of polynomials $P_1, \dotsc, p_k:\overline{F}^d \rightarrow \overline{F},$ of degrees $d_1, \dotsc, d_k.$ In the future, we say that $V$ has complexity $M$ if all of the $d_i,$ $k,$ and $d$ are smaller than $M.$
Let $|V(F)|$ be the number of $F$ points of $V.$
\begin{lemma}[\cite{LangWeil}]
\label{langweillemma}
\[
|V(F)| \ll_M |F|^{\dim{V}}.
\]
\end{lemma}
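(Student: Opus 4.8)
The statement to be proved is the elementary upper-bound half of the Lang--Weil estimate; crucially it asks only for $|V(F)| \ll_M |F|^{\dim V}$, and \emph{not} for the sharp main term or the $|F|^{\dim V - 1/2}$ error. Consequently no input from the Riemann hypothesis for curves (or \'etale cohomology) is needed, and the whole thing can be done by induction on $n = \dim V$ through a coordinate-projection/fibering argument, using only B\'ezout-type degree bounds from effective elimination theory to keep every auxiliary variety of complexity bounded by a function of $M$.

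First I would reduce to the irreducible case. Decompose $V$ over $\overline{F}$ into irreducible components $W_1,\dots,W_r$. By standard effective bounds the degree of $V$, and hence the number $r$ of its components and the degree of each, is controlled in terms of $M$, so $r \ll_M 1$ and each $W_j$ has complexity $\ll_M 1$. Since $V(F) = \bigcup_j \bigl(W_j(\overline{F}) \cap F^d\bigr)$ as a set, it suffices to bound the $F$-points on each $W_j$; note that the $W_j$ need not be defined over $F$, which is harmless because we only ever count points lying in $F^d$.

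For the induction the base case $n=0$ is immediate, as a zero-dimensional variety of complexity $\ll_M 1$ has $\ll_M 1$ points. For the inductive step assume $\dim W_j = n \geq 1$. Since $W_j$ is irreducible and positive-dimensional, some coordinate function $x_i$ is non-constant on it, so $x_i : W_j \to \mathbb{A}^1$ is dominant; as $W_j$ is irreducible, no fiber can equal $W_j$, whence every slice $W_j \cap \{x_i = t\}$ has dimension $\leq n-1$. Writing
\[
|W_j(F)| = \sum_{t \in F} \bigl| \, (W_j \cap \{x_i = t\})(F) \, \bigr|,
\]
each slice is cut out by one extra linear equation, so it again has complexity $\ll_M 1$ and, by the inductive hypothesis, contributes $\ll_M |F|^{n-1}$. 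Summing over the $|F|$ values of $t$ gives $|W_j(F)| \ll_M |F|^n \leq |F|^{\dim V}$, and summing over the $\ll_M 1$ components completes the bound.

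The main obstacle is not the geometric induction, which is routine, but the \emph{uniformity}: one must know that passing to irreducible components and to hyperplane slices inflates the complexity (the number of equations, their degrees, and the number of components) only by amounts depending on $M$ alone, so that the implied constants in the nested induction never depend on $|F|$. This is exactly the effective-algebraic-geometry content --- B\'ezout's theorem together with degree bounds for the components of a variety --- and it is what makes the constant in $\ll_M$ a genuine function of $M$. An alternative to the fibering step is to invoke Noether normalization to produce a finite map $W_j \to \mathbb{A}^n$ of degree $\ll_M 1$ over a bounded extension of $F$, giving $|W_j(F)| \ll_M |\mathbb{A}^n(F)| = |F|^n$ directly; I prefer the fibering argument as it is more self-contained.
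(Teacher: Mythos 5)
The paper does not prove this lemma at all: it is quoted verbatim (as Lemma 1) from the Lang--Weil paper via Tao's exposition, so there is no in-paper argument to compare against. Your proof is correct and is essentially the standard argument for this weak (Schwartz--Zippel-type) half of the Lang--Weil bound --- indeed it is close to the proof given in the cited source: decompose into $\ll_M 1$ irreducible components of bounded degree, then induct on dimension by slicing along a coordinate that is non-constant on the component, noting that each fiber is a proper closed subset of an irreducible variety and hence drops dimension. You correctly identify that no Riemann-hypothesis input is needed at this level of precision, and that the only real work is the uniformity of complexity under taking components and slices, which is supplied by B\'ezout-type degree bounds. The one point worth making explicit if you were to write this up fully is that each component $W_j$, known a priori only as a closed subset of $\overline{F}{}^d$ of bounded degree, must be re-presented as the zero set of boundedly many polynomials of bounded degree before the inductive hypothesis can legitimately be applied to its hyperplane slices; you flag this, and it is a standard fact of effective elimination theory, so the argument is complete in outline.
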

Lemma \ref{langweillemma} immediately implies the following:
\begin{lemma}
\label{bertrandlem}
Let $V$ be a variety as above, but now defined over $\mathbb{Z}.$ Then, the number $|V(H)|$ of $\mathbb{Z}$-points of $V$ of height bounded above by $H$ is bounded 
\[
|V(H)| \\ll_M |H|^{\dim{V}}.
\]
\end{lemma}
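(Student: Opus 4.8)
The plan is to count the integer points of bounded height by reducing them modulo a single, carefully chosen prime and then applying the Lang--Weil bound of Lemma~\ref{langweillemma} over the resulting finite field. Concretely, I would first invoke Bertrand's postulate to pick a prime $p$ with $2H < p \le 4H + 2$, so that $p \asymp H$ with an absolute constant. The role of the inequality $p > 2H$ is to make the reduction-mod-$p$ map injective on the box of candidate points: if $x, x' \in \{-H, \ldots, H\}^d$ satisfy $x \equiv x' \pmod p$ in every coordinate, then $|x_i - x_i'| \le 2H < p$ forces $x = x'$. Since any integer point lying on $V$ reduces to an $\mathbb{F}_p$-point of the mod-$p$ reduction $V_p$ (the common zero locus of the $P_j$ read modulo $p$), this produces an injection $V(H) \hookrightarrow V_p(\mathbb{F}_p)$, and in particular $|V(H)| \le |V_p(\mathbb{F}_p)|$.

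Next I would feed $V_p$ into Lemma~\ref{langweillemma}. The combinatorial data governing the complexity --- the number $k$ of equations, their degrees $d_j$, and the ambient dimension $d$ --- are inherited verbatim from $V$, so $V_p$ has complexity at most $M$ and the implied constant is uniform in $p$. Together with $p \le 4H + 2$ this gives
\[
|V(H)| \le |V_p(\mathbb{F}_p)| \ll_M p^{\dim V_p} \ll_M H^{\dim V_p}.
\]

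The one genuine subtlety, and the step I expect to require the most care, is to guarantee that $\dim V_p = \dim V$ rather than something strictly larger: fiber dimension over $\operatorname{Spec}\mathbb{Z}$ is only upper semicontinuous, so at finitely many bad primes the reduction can acquire spurious high-dimensional components (already $\{py = 0\}$ degenerates to an entire affine line modulo $p$, inflating the count from $O(1)$ to $\asymp p$). I would handle this by restricting the search in Bertrand's postulate to primes exceeding every bad prime of the family --- equivalently, to the finitely many $p$ at which $V$ has a purely vertical component --- which is legitimate for all $H$ beyond a threshold depending on $V$; the finitely many smaller values of $H$ are absorbed into the implied constant via the trivial estimate $|V(H)| \le (2H+1)^d$. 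For such a good prime one has $\dim V_p = \dim V$, and the displayed chain collapses to $|V(H)| \ll_M H^{\dim V}$, which is the assertion.
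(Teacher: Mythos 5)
Your proof is correct and follows essentially the same route as the paper: reduce modulo a Bertrand prime of size comparable to $H$ and apply Lemma~\ref{langweillemma}. In fact you are more careful than the paper on two points it glosses over --- you take $p > 2H$ so that reduction is genuinely injective on the box $\{-H,\dots,H\}^d$ (the paper's choice $H \le p \le 2H$ does not quite guarantee this), and you address the possibility that $\dim V_p > \dim V$ at finitely many bad primes, which the paper does not mention at all.
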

\begin{proof}
By Bertrand's postulate, there is a prime $H\leq p \leq 2 H.$ The number of $\mathbb{F}_p$ points of $V$ reduced modulo $p$ is bounded by Lemma \ref{langweillemma}. On the other hand, every $\mathbb{Z}$ point of $V$ corresponds to a unique $\mathbb{F}_p$ point of the reduction mod $p.$ The statement of the Lemma follows. 
\end{proof}

\section{Irreducibility}
\label{irred}

First, consider the set $P_d(N)$ of monic polynomials $p(x)$ with integer coefficients of degree $d,$  such that all coefficients all in $[-N, N].$ 

What is the probability that such a polynomial is irreducible?

Pick some $t+s = d.$ Then, the statement that $p(x) = q(x) r(x),$ with $q,r$ of degrees $t, s$ respectively is equivalent to the system of polynomial equations of the form 
\[
a_m = \sum b_l c_{m-l},
\]
where $a, b, c$ are coefficients of $p, q, r$ respectively. In general, every polynomial can be written in this form (this is the Fundamental Theorem of Algebra), but now, if we assume that $a_0 = 1,$ we see that $b_0, c_0 = \pm 1.$ We have $d-1$ equations in the $d-2$ unknowns $b_{s-1}, \dotsc, b_2, c_{t-1}, \dotsc, c_2.$ Eliminating the $b$s and the $c$s gives us a single equation in the $a$s (an iterated resultant has to vanish for the equations to have a solution). Either this is a genuine equation or something of the form $0=0.$ The latter would imply that \emph{every} polynomial with constant term $1$ would be reducible (into factors of degree $s$ and $t$), which is known to be false, so the set of polynomials which factors in this way forms a co-dimension 1 variety in $\mathbb{A}^{d-1}.$  Using Lemma \ref{bertrandlem}, we see that the probability of a polynomial $p(x) \in P_d(N),$ with $p(0)=1$ factoring is $O(1/N).$ 

Suppose now that the constant term is allowed to vary. For each divisor of $a_0$ we have the same argument as before (since the size of the divisor does not affect the complexity of the variety), which, given the well-known fact that the average number of divisors of $n\in [1, N]$ is approximately $\log N,$ gives us, finally, that the probability of a polynomial $p \in P_d(N)$  being irreducible is $O\left(\frac{\log N}{N}\right).$

\section{Some generalities on groups}
\label{groups}
\begin{definition}
\label{trans}
A permutation group $G_n \leq S_n$ is \emph{$k$-transitive} if it acts transitively on \emph{ordered} $k$-tuples of elements of $\{1, \dotsc, n\}.$
\end{definition}
\begin{fact}
\label{sixtrans}
The only $6$-transitive permutation groups are $S_n$ and $A_n.$
\end{fact}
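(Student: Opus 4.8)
The plan is to deduce this from the classification of finite $2$-transitive permutation groups, which is itself a corollary of the Classification of Finite Simple Groups. The first step is the trivial but essential reduction: a $k$-transitive group with $k \geq 2$ is in particular $2$-transitive, so any $6$-transitive group appears on the known list, and I then only have to read off the maximal degree of multiple transitivity attained by each entry. To organize the list I would invoke Burnside's dichotomy: the socle (unique minimal normal subgroup) of a finite $2$-transitive group $G$ on $n$ points is either elementary abelian, in which case $G$ is of affine type with $n = q^d$ and $G \leq \mathrm{A\Gamma L}_d(q)$, or a nonabelian simple group, in which case $G$ is almost simple. I treat the two cases separately.

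For the affine case, the stabilizer of a point is a subgroup of $\mathrm{\Gamma L}_d(q)$ acting on the $q^d - 1$ nonzero vectors, and one checks that such a stabilizer is at most $2$-transitive on the nonzero vectors: over $\mathbb{F}_2$ distinct nonzero vectors are automatically independent, so $\mathrm{GL}_d(2)$ is transitive on independent pairs but cannot send an independent triple to a dependent one, while over larger fields even $2$-transitivity on nonzero vectors already fails, since linear dependence is preserved. Hence an affine group is at most $3$-transitive and never reaches degree $6$. The only apparent exceptions are degenerate small cases, such as $\mathrm{AGL}_2(2) = S_4$, where the group already coincides with $S_n$ or $A_n$.

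For the almost simple case I would run through the explicit enumeration of socles that admit a $2$-transitive action: the projective groups $\mathrm{PSL}_d(q)$ on projective space, the symplectic groups $\mathrm{Sp}_{2m}(2)$, the unitary groups $\mathrm{PSU}_3(q)$, the Suzuki and Ree families, a handful of sporadic actions, and the alternating, symmetric, and Mathieu groups. All the infinite families of Lie type are at most $3$-transitive (for instance $\mathrm{PGL}_2(q)$ is sharply $3$-transitive on the projective line, and higher-dimensional projective actions are only $2$-transitive because collinearity is an invariant), and the sporadic entries are of low transitivity; the only members reaching degree $4$ or $5$ are the Mathieu groups, with $M_{11}, M_{23}$ being $4$-transitive and $M_{12}, M_{24}$ being $5$-transitive. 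Since none of these attains $6$-transitivity, the only $6$-transitive groups are $S_n$, which is $n$-transitive, and $A_n$, which is $(n-2)$-transitive.

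The main obstacle is that there is no elementary shortcut here: the statement genuinely rests on CFSG, and the real content is the exhaustive verification that every non-alternating, non-symmetric $2$-transitive group fails to be $6$-transitive. In particular, the sharp cutoff at degree $5$ for the Mathieu groups is exactly what makes the threshold $6$ — rather than some larger integer — the correct one, so the bound cannot be improved to a smaller transitivity degree.
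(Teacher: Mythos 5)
Your argument is correct and follows the same route the paper intends: the paper states this as a Fact and, in the remark immediately after, simply attributes it to the Classification of Finite Simple Groups, and your proposal is the standard fleshing-out of that citation via Burnside's affine/almost-simple dichotomy and the list of finite $2$-transitive groups, with the Mathieu groups correctly identified as the extremal cases capping transitivity at $5$. No substantive difference in approach, and no gaps.
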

\begin{remark}
In fact, aside from the Matthieu groups $M_{24}$ ($5$-transitive), $M_{23}$  ($4$-transitive), $M_{12}$ ($5$-transitive) and $M_{11}$ ($4$-transitive), all other groups are $3$-or-less transitive. Fact \ref{sixtrans} and the contents of this remark follow from the Classification of Finite Simple Groups.
\end{remark}
\begin{definition}
\label{homo}
We say that a permutation group $G_n \leq S_n$ is $k$-homogeneous if $G$ acts transitively on the set of \emph{unordered} $k$-tuples of elements of $\{1, \dotsc, n\}.$
\end{definition}
\begin{theorem}[Livingstone-Wagner \cite{LivWag}]
\label{lv}
If (with notation as in definition \ref{homo}) the group $G_n$ is $k$-homogeneous, with $k\geq 5$ and $2 \leq k \leq \frac12 n,$ then $k$ is $k$-transitive.
\end{theorem}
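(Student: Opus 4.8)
The plan is to translate the combinatorics of homogeneity and transitivity into the rational (equivalently complex) representation theory of $S_n$, where both the hypothesis and the desired conclusion (which I read as ``$G_n$ is $k$-transitive,'' the statement evidently containing a typo) become assertions about the multiplicity of the trivial character. The organizing principle is the standard one: for any $G$-set $X$ with permutation character $\pi_X$, the number of $G$-orbits on $X$ equals $\langle 1_G, \pi_X|_G\rangle$. Taking $X$ to be the set of unordered $j$-subsets of $\{1,\dots,n\}$, with permutation character $\pi_j$, the group $G$ is $j$-homogeneous exactly when $\langle 1_G,\pi_j|_G\rangle = 1$; taking $X$ to be the set of ordered $j$-tuples of distinct points, with permutation character $\sigma_j$, the group is $j$-transitive exactly when $\langle 1_G,\sigma_j|_G\rangle = 1$. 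The whole argument is then a comparison of $\pi_k$ and $\sigma_k$ through their decompositions into $S_n$-irreducibles.

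The first structural input I would use is the classical multiplicity-free decomposition of the Johnson module, valid for $j \le n/2$,
\[
\pi_j = \sum_{i=0}^{j} \chi^{(n-i,\,i)},
\]
a sum of the irreducibles indexed by two-row partitions. Hence $\langle 1_G,\pi_j|_G\rangle - \langle 1_G,\pi_{j-1}|_G\rangle = \langle 1_G,\chi^{(n-j,j)}|_G\rangle \ge 0$, so the number of $G$-orbits on $j$-subsets is nondecreasing in $j$ throughout the range $j \le n/2$. Consequently $k$-homogeneity forces $1$-, $2$-, $\dots$, $(k-1)$-homogeneity, and in particular transitivity. For the ordered side I would invoke Young's rule, writing $\sigma_j = \sum_{\lambda_1 \ge n-j} f^{\lambda/(n-j)}\,\chi^\lambda$, in which the two-row terms refine the information carried by $\pi_j$ while the remaining terms are indexed by partitions $\lambda$ with at least three rows. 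A parallel monotonicity and injectivity comparison, exactly in the spirit of Livingstone and Wagner, then upgrades $k$-homogeneity ($k \le n/2$) to the intermediate conclusion that $G$ is $(k-1)$-\emph{transitive}.

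It remains to pass from $(k-1)$-transitivity to $k$-transitivity, and this is where the hypothesis $k \ge 5$ is decisive and where I expect the real difficulty to lie. In representation-theoretic terms $G$ is $k$-transitive precisely when $\langle 1_G,\chi^\lambda|_G\rangle = 0$ for every $\lambda \ne (n)$ with $\lambda_1 \ge n-k$, whereas $k$-homogeneity only controls the two-row constituents; the entire gap is therefore carried by the constituents $\chi^\lambda$ with $\lambda$ having three or more rows (the smallest shapes being $(n-2,1,1)$, $(n-3,2,1)$, $(n-4,2,2)$), and combinatorially the question is whether the setwise stabilizer of a $k$-set induces the full symmetric group $S_k$ on that set. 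The main obstacle is to show that these higher constituents contribute no trivial $G$-summand once $k \ge 5$. I would handle it by an induction on the multiplicities $\langle 1_G,\chi^\lambda|_G\rangle$, peeling off the already-controlled two-row data at each stage and using the descending chain of homogeneities to force the three-row multiplicities to vanish. The threshold is genuinely sharp: the $2$-, $3$-, and $4$-homogeneous groups that fail to be transitive -- the one-dimensional affine groups $x \mapsto ax+b$ with $a$ a square, the groups $\mathrm{PSL}(2,q)$ with $q \equiv 3 \pmod 4$, and the degree-$9$ and degree-$33$ examples related to $\mathrm{P\Gamma L}(2,8)$ and $\mathrm{P\Gamma L}(2,32)$ -- realize exactly the surviving three-row constituents for $k \le 4$. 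Pinning down precisely why no such exceptional configuration can persist once $k \ge 5$ is, I expect, the delicate heart of the argument.
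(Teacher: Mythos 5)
The paper offers no proof of this statement at all: it is quoted as a known theorem of Livingstone and Wagner with a citation, so there is nothing internal to compare your argument against; it must be judged on its own terms. On those terms, you correctly set up the standard framework (orbit counting via $\langle 1_G,\pi_j\rangle$ and $\langle 1_G,\sigma_j\rangle$, the multiplicity-free decomposition $\pi_j=\sum_{i=0}^{j}\chi^{(n-i,\,i)}$ for $j\le n/2$, and the resulting monotonicity of the number of orbits on $j$-subsets), which is indeed the territory in which Livingstone and Wagner work, and you correctly identify the typo in the conclusion (it should read ``$G_n$ is $k$-transitive'').

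However, the proposal is not a proof; two steps are missing. First, the passage from $k$-homogeneity to $(k-1)$-transitivity is only gestured at (``a parallel monotonicity and injectivity comparison''). Note that $k$-homogeneity only annihilates the two-row constituents $\chi^{(n-i,\,i)}$, $1\le i\le k$, from the trivial isotypic part, whereas $(k-1)$-transitivity requires killing \emph{every} $\chi^{\lambda}$ with $\lambda_1\ge n-k+1$, including shapes with three or more rows such as $(n-2,1,1)$; so the ordered-tuple comparison is genuinely harder than the unordered one and needs an actual argument (Livingstone and Wagner's orbit-comparison theorem for ordered tuples), not an appeal to its ``spirit.'' Second, and more seriously, the step you yourself label ``the delicate heart of the argument'' --- why $k\ge 5$ forces the setwise stabilizer of a $k$-set to induce all of $S_k$, i.e., why no exceptional configuration analogous to the $k\le 4$ examples can survive --- is left entirely unproved; ``an induction on the multiplicities, peeling off the two-row data'' is a plan, not an argument, and it is exactly here that the hypothesis $k\ge 5$ must do its work. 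As written, the proposal establishes (modulo the first gap) only the descending chain of homogeneities, which holds for all $2\le k\le n/2$, and does not reach the stated conclusion.
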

\begin{remark} Obviously the hypotheses of Theorem \ref{lv} can only be met for $n\geq 10.$ \end{remark}
\section{Some probabilistic facts on Galois groups}
\label{probgal}
Consider a polynomial $p$ chosen uniformly at random from  $P_d(N).$ B.~l.~van der Waerden \cite{vdW} showed (by sieve methods) that the probability $\mathfrak{p}(S_d)$ that $p$ has Galois group different from $S_n$ is $\mathfrak{p}(S_d)\ll N^{-1/6}.$ This was later improved by P.~X.~Gallagher \cite{galprobgal} to $\mathfrak{p}(S_d)\ll N^{-1/2}.$ In 1980, S.~D.~Cohen \cite{cohengalois} showed that if we look at smaller sets $P_d^{i_1, \dotsc, i_k}(N),$ which is the set of polynomials with coefficients $a_{i_1}, \dotsc, a_{i_k}$ fixed, (and $k< d-1$), then the same result holds as long as $a_0\neq 0,$ and the polynomial with coefficients thus fixed does not have the form $p(x^l),$ for some $l>1.$ We will need Cohen's result only in the case where only the constant term is fixed (so, the set is $P_d^{0}(N)$) and in this case a much simpler proof was given independently of Cohen's work in \cite{rivinduke}.

% We will need a further fact, whose proof, while very easy, will be a model for the proof of our main result.
% \begin{theorem}
% \label{noalt}
% The probability that a polynomial $p,$ uniformly bounded in $P_d^{0}(N)$ has Galois group a subgroup of the alternating group $A_d$ is of order $O(1/N).$
% \end{theorem}
% \begin{proof}
% It is well-known that the Galois group of $p$ is a subgroup of $A_d$ if and only if the discriminant of $p$ is a perfect square. The discriminant is a polynomial $D_{a_0}(a_1, \dotsc, a_{d-1})$  in the coefficients of $p.$ Consider the condition 
% \[
% D_{a_0}(a_1, \dotsc, a_{d-1}) = y^2.
% \]
% If this defines a proper subvariety of $\mathbb{A}^d,$ then, by the argument of Lemma \ref{bertrandlem}, this has $O(N^{d-1})$ points, and since the projection to the hyperplane 
% \end{proof}

\section{Some generalities on polynomials}
\label{polysec}
Let $M$ be a (semisimple) linear transformation of a complex $n$-dimensional vector space $V^n.$ We define $\bigwedge^k M$ to be the induced transformation on $\bigwedge^k V^n.$ The following is standard (and easy):
\begin{fact}
The eigenvalues of $\bigwedge^k M$ are products of $k$-tuples of eigenvalues of $M.$
\end{fact}

\begin{lemma}
\label{kirred}
If the Galois group of $\chi(M)$ is $A_n$ or $S_n,$ and $k<n,$ then the Galois group of $\chi_k(M)$ is $A_n$ or $S_n.$ In particular, $\chi_k(M)$ is irreducible.
\end{lemma}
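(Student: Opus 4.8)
The plan is to pass from the root structure of $\chi(M)$ to that of $\chi_k(M)$ through the natural action of $G = \mathrm{Gal}(\chi(M))$ on $k$-subsets. Write $\lambda_1, \dots, \lambda_n$ for the eigenvalues of $M$; since $G$ is $A_n$ or $S_n$, it is transitive, so $\chi(M)$ is irreducible, hence separable, and the $\lambda_i$ are distinct. By the Fact preceding the statement, the eigenvalues of $\bigwedge^k M$ are the products $\lambda_S := \prod_{i\in S}\lambda_i$ as $S$ ranges over the $k$-element subsets of $\{1,\dots,n\}$, so that $\chi_k(M) = \prod_{|S| = k}(x - \lambda_S)$ has degree $\binom{n}{k}$. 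Identifying the roots of $\chi(M)$ with $\{1,\dots,n\}$, the group $G$ permutes them in the standard way and therefore acts on the $\lambda_S$ by $g\cdot\lambda_S = \lambda_{gS}$. The splitting field of $\chi_k(M)$ is the subfield of that of $\chi(M)$ generated by the $\lambda_S$, so $\mathrm{Gal}(\chi_k(M))$ is a quotient of $G$.

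Two observations then finish the argument, given one key input. First, both $A_n$ and $S_n$ act transitively on $k$-subsets for $1 \le k \le n-1$ (the setwise stabilizer $S_k \times S_{n-k}$ contains an odd permutation once $n \ge 3$), so $G$ acts transitively on the roots $\lambda_S$. Second, the kernel $N \trianglelefteq G$ of the action on the splitting field of $\chi_k(M)$ consists of those $\sigma$ fixing every $\lambda_S$; a permutation stabilizing every $k$-subset setwise is the identity for $1 \le k \le n-1$, so $N$ is trivial and $\mathrm{Gal}(\chi_k(M)) \cong G$ is again $A_n$ or $S_n$. Both of these conclusions, together with irreducibility, hinge on the key input that the values $\lambda_S$ are pairwise distinct, i.e.\ that $S \mapsto \lambda_S$ is injective.

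Establishing this distinctness is where I expect the real work to lie. Suppose $\lambda_S = \lambda_T$ with $S \ne T$; cancelling the common factors $\prod_{i \in S \cap T}\lambda_i$ leaves disjoint nonempty subsets $S', T'$ of equal size $m \le n/2$ with $\prod_{i \in S'}\lambda_i = \prod_{j \in T'}\lambda_j$. Applying $\sigma \in G$ replaces $(S', T')$ by $(\sigma S', \sigma T')$ and preserves the identity, so I can exploit the high transitivity of $A_n$ and $S_n$. In the generic case there is an index outside $S' \cup T'$, and choosing $\sigma$ (a transposition for $S_n$, a $3$-cycle for $A_n$) that carries a single element of $S'$ to such a fresh index while fixing $T'$ collapses the relation to $\lambda_a = \lambda_b$ for two distinct roots, which is impossible. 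The boundary cases, where $S' \cup T'$ exhausts or nearly exhausts $\{1,\dots,n\}$, are the delicate part, and I expect them to be the main obstacle: here the same manipulation yields instead a relation such as $\lambda_a^2 = \lambda_b^2$ or $\lambda_a^2 = \lambda_b\lambda_c$, and iterating transitivity forces all the $\lambda_i^c$ to coincide for some fixed exponent $c$.

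Their common value $\beta$ is then Galois-invariant, hence rational, so every root of $\chi(M)$ satisfies $x^c - \beta = 0$; this bounds $n \le c$ and, more tellingly, exhibits $\mathrm{Gal}(\chi(M))$ as a subgroup of the solvable group $\mathrm{Gal}(x^c - \beta)$, contradicting the non-solvability of $A_n$ and $S_n$ for $n \ge 5$ (the only range relevant here, the paper working in degree $d \ge 12$). I therefore expect the proof to reduce, in every case, to the dichotomy: either two eigenvalues coincide outright, or all root-ratios are roots of unity and the Galois group is solvable — both absurd. Once distinctness is in hand, transitivity of $G$ on the $\binom{n}{k}$ distinct roots $\lambda_S$ gives irreducibility of $\chi_k(M)$, and the triviality of $N$ gives the claimed identification of its Galois group.
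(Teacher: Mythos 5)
Your proposal is correct in substance, but it takes a genuinely different route from the paper's. The paper argues top-down: the splitting field of $\chi_k(M)$ sits inside that of $\chi(M)$, so $\mathrm{Gal}(\chi_k(M))$ is a quotient of $G=\mathrm{Gal}(\chi(M))$; since $A_n$ and $S_n$ have almost no normal subgroups, the quotient is $A_n$, $S_n$, or trivial (the paper in fact omits the quotient $S_n/A_n\cong\mathbb{Z}/2$), and the trivial case is excluded because it would make every $\lambda_S$ rational. You argue bottom-up: you prove the map $S\mapsto\lambda_S$ is injective, then read off both faithfulness (only the identity stabilizes every $k$-subset) and transitivity (both $A_n$ and $S_n$ are $k$-homogeneous) of the $G$-action on the $\binom{n}{k}$ roots of $\chi_k(M)$, obtaining the Galois group and irreducibility simultaneously. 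Your route buys something real: the clause ``in particular, $\chi_k(M)$ is irreducible'' genuinely requires the $\lambda_S$ to be pairwise distinct (an irreducible polynomial in characteristic zero is separable), and the paper's proof never establishes this, whereas it is the centerpiece of yours; you also sidestep the missing $\mathbb{Z}/2$ case. The cost is the case analysis you only sketch, but the cases do close as you predict: in the generic case a transposition (or $3$-cycle, for $A_n$) moving one index of $S'$ to a fresh index yields $\lambda_a=\lambda_b$ outright, and in the boundary case $|S'\cup T'|=n$ the swap of $a\in S'$ with $b\in T'$ yields $\lambda_a^2=\lambda_b^2$, hence $\lambda_b=-\lambda_a$ for all such pairs, which already contradicts distinctness of the roots of $\chi(M)$ once $|S'|=|T'|\geq 2$ --- so the solvability endgame is rarely even needed. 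One caveat: where you do invoke non-solvability you quietly assume $n\geq 5$, which matches the paper's working range $d\geq 12$ but not the literal statement of the lemma; in the small cases the direct contradictions $\lambda_a=\lambda_b$ and $\lambda_b=-\lambda_a$ still suffice.
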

\begin{proof} The Galois group of $\chi_k(M)$ is a normal subgroup of the Galois group of $\chi(M),$ and so is either $A_n, S_n, or \{1\}.$ In the first two cases we are done. In the last case, the fact that the products of $k$-tuples of roots of $\chi(M)$ is rational tells us that the Galois group of $\chi(M)$ is a subgroup of $S_k,$ contradicting our assumption.
\end{proof}

\begin{lemma}
\label{khom}
 Suppose that $\chi_k(M)$ is irreducible. Then the Galois group of $\chi(M)$ is $k$-homogeneous.
\end{lemma}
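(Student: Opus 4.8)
The plan is to make the combinatorial action underlying $\chi_k(M)$ completely explicit and then transport transitivity across it. Write $\lambda_1,\dots,\lambda_n$ for the roots of $\chi(M)$ in a splitting field $L$, and let $G=\mathrm{Gal}(L/\mathbb{Q})$ be the Galois group, viewed as a subgroup of the symmetric group on the indices $\{1,\dots,n\}$. By the Fact that the eigenvalues of $\bigwedge^k M$ are products of $k$-tuples of eigenvalues of $M$, the roots of $\chi_k(M)$ are exactly the products $\lambda_S:=\prod_{i\in S}\lambda_i$, where $S$ ranges over the $k$-element subsets of $\{1,\dots,n\}$; there are $\binom{n}{k}=\deg\chi_k(M)$ of them, listed with multiplicity.

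First I would record the equivariance. For $\sigma\in G$ one computes $\sigma(\lambda_S)=\prod_{i\in S}\sigma(\lambda_i)=\prod_{i\in S}\lambda_{\sigma(i)}=\lambda_{\sigma(S)}$, so the assignment $S\mapsto\lambda_S$ intertwines the natural action of $G$ on $k$-subsets (Definition \ref{homo}) with its action on the roots of $\chi_k(M)$. Consequently, once this assignment is known to be a bijection, transitivity of $G$ on the roots is equivalent to transitivity of $G$ on $k$-subsets.

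The key step, and the one that actually consumes the hypothesis, is to rule out coincidences among the products $\lambda_S$. Since $\chi_k(M)$ is irreducible over $\mathbb{Q}$ and $\mathbb{Q}$ has characteristic zero, $\chi_k(M)$ is separable, so its $\binom{n}{k}$ roots are pairwise distinct. Comparing with the factorisation $\chi_k(M)=\prod_S(x-\lambda_S)$ forces the $\lambda_S$ themselves to be pairwise distinct, i.e. $S\mapsto\lambda_S$ is a $G$-equivariant \emph{bijection} from $k$-subsets onto the roots of $\chi_k(M)$. Irreducibility of $\chi_k(M)$ means $G$ permutes its roots transitively; pulling this back through the bijection shows $G$ is transitive on $k$-subsets, which is precisely $k$-homogeneity.

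I expect the only genuine subtlety to be this last distinctness point. An equivariant surjection from $k$-subsets onto the root set would not by itself transfer transitivity: if two subsets lying in different $G$-orbits happened to share a product, $G$ could act transitively on the (smaller) root set while failing to be transitive on subsets. Separability of an irreducible polynomial in characteristic zero is exactly what closes this gap, so I would foreground that observation rather than treat it as routine bookkeeping.
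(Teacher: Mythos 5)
Your proof is correct and follows the same route as the paper: both arguments rest on the observation that irreducibility of $\chi_k(M)$ forces its roots to be distinct, so the $G$-equivariant assignment of $k$-subsets to products of roots is a bijection and transitivity on the roots of $\chi_k(M)$ pulls back to $k$-homogeneity. You spell out the equivariance and the separability step more explicitly than the paper's two-line version, but the substance is identical.
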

\begin{proof}
The Galois group of $\chi_k(M)$ is a subgroup of the Galois group of $\chi(M).$ Since the roots of $\chi_k(M)$ are distinct, it follows that the Galois group of $\chi(M)$ acts transitively on unordered $k$-tuples of roots of $\chi(M),$ so is $k$-homogeneous. Therefore, so is the Galois group of $\chi(M).$
\end{proof}
At this point, let $p(x)$ be a polynomial, and let $M$ be the companion matrix of $p.$ We will denote the characteristic polynomial of $\bigwedge^k M$ by $p_k(x).$
The coefficients of $p_k$ are polynomials in the coefficients of $p.$ In particular, the constant term of $p_k$ equals 
\[
a_0^{\binom{d}{k}\frac{k}{d}},
\]
where $a_0$ is the constant term of $p.$

\section{The number of divisors}
\label{divsec}
We will need a couple of facts about the function $\tau(n),$ which equals the number of divisors of a positive integers $n.$
The first (and the only one we \emph{really} need is
\begin{fact}
\label{epsfact}
\[\tau(n) \ll_\epsilon n^\epsilon,\]
for any positive $\epsilon.$
\end{fact}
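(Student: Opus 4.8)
The plan is to exploit the multiplicativity of $\tau$ together with the elementary inequality $a + 1 \leq 2^a$ (valid for all integers $a \geq 0$). Writing $n = \prod_i p_i^{a_i}$ in its prime factorization, we have $\tau(n) = \prod_i (a_i + 1)$, so it suffices to bound the ratio
\[
\frac{\tau(n)}{n^\epsilon} = \prod_i \frac{a_i + 1}{p_i^{\epsilon a_i}}
\]
uniformly in $n$ by a constant depending only on $\epsilon$.

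First I would split the primes dividing $n$ into two classes according to whether $p_i^\epsilon \geq 2$ or $p_i^\epsilon < 2$, i.e., whether $p_i \geq 2^{1/\epsilon}$ or not. For a prime in the first (``large'') class, the chain of inequalities $a_i + 1 \leq 2^{a_i} \leq (p_i^\epsilon)^{a_i} = p_i^{\epsilon a_i}$ shows the corresponding factor is at most $1$, so these primes contribute nothing to the bound. The crucial point is that there are only finitely many primes in the second (``small'') class, namely $p < 2^{1/\epsilon}$.

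For each such fixed small prime $p$, consider the function $a \mapsto (a+1)/p^{\epsilon a}$ on nonnegative integers $a$: its numerator grows linearly while its denominator grows exponentially, so the function tends to $0$ and attains a finite maximum $C_p < \infty$. Taking the product of these finitely many maxima gives a constant
\[
C_\epsilon = \prod_{p < 2^{1/\epsilon}} C_p < \infty
\]
depending only on $\epsilon$, and hence $\tau(n)/n^\epsilon \leq C_\epsilon$ for every $n$, which is exactly the claim.

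There is no serious obstacle here; the one point requiring (elementary) care is the threshold argument showing that all but finitely many primes contribute a factor $\leq 1$, so that the leftover product is a genuine \emph{finite} product of finite suprema rather than an infinite product whose convergence would need to be checked. (If one prefers to avoid the case split entirely, an alternative is to bound the full product $\prod_p \sup_{a \geq 0} (a+1)/p^{\epsilon a}$ and verify its convergence directly, but the split above has the advantage of keeping the dependence of the constant on $\epsilon$ transparent.)
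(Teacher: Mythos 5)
Your proof is correct and complete: the split of primes at the threshold $2^{1/\epsilon}$, the observation that $a+1 \le 2^a \le p^{\epsilon a}$ kills every large-prime factor, and the finiteness of the remaining product of suprema together give a genuine constant $C_\epsilon$ with $\tau(n) \le C_\epsilon n^\epsilon$. This is, however, a different route from the paper, which does not argue directly at all: it deduces the fact as an immediate corollary of the sharper maximal-order bound $\tau(n) \ll n^{1/\log\log n}$ and refers to Tenenbaum for that. Your argument has the advantage of being self-contained and elementary, which is arguably preferable for a statement the paper flags as the only one it \emph{really} needs; what it gives up is the quantitative refinement. If you wanted to recover the paper's more precise statement from your method, you would have to make $C_\epsilon$ explicit (it is of size roughly $\exp\bigl(O(2^{1/\epsilon})\bigr)$, coming from the product over the small primes) and then optimize the choice of $\epsilon$ as a function of $n$, taking $\epsilon$ on the order of a constant times $1/\log\log n$; that is essentially how the $n^{1/\log\log n}$ bound is proved in the reference. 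For the use the paper makes of the fact, your version is entirely sufficient.
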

This follows from the slightly more precise statement:
\begin{equation}
\label{maxdiv}
\tau(n) \ll n^{\frac1{\log \log n}}.
\end{equation}
An extensive discussion of these facts can be found in \cite[Section I-5.2]{tenenbaum1995introduction}.
If we want to get somewhat better asymptotics, we first recall the fairly well-known "Dirichlet's Hyperbola Theorem":
\begin{theorem}
\label{dirichlet}
\[
\frac1x \sum_{n< x}\tau(n) \sim \log x.
\]
\end{theorem}
Theorem \ref{dirichlet} can be found in \cite[Section I-3.2]{tenenbaum1995introduction}.

We now consider the function $\tau_k(n) = \tau(n^k).$ Fact \ref{epsfact} immediately implies the same statement for $\tau_k:$
\begin{equation}
\label{taukest}
\tau_k(n) \ll_{\epsilon, k} n^\epsilon,\quad \mbox{for any $\epsilon > 0$}.
\end{equation}
To understand the average behavior of $\tau_k,$ we can use the following result of E.~Wirsing \cite{ewirsing}:
\begin{theorem}[Wirsing's Theorem]
Let $f$ be a positive multiplicative function satisfying the two conditions
\begin{enumerate}
\item $f(p^\nu) \leq \gamma_1 \gamma_2^\nu,$ with $\gamma_2 < 2,$ $p$ prime, $\nu=2, 3, \dotsc.$
\item $\sum_{p<x}f(p) \sim \tau \frac{x}{\log x},$ as $x\rightarrow \infty.$
\end{enumerate}
Then, as $x\rightarrow \infty,$
\[
\sum_{n<x}f(n) \sim \frac{\exp(\gamma \tau)}{\Gamma(\tau)}\frac{x}{\log x}\prod_{p<x}\sum_{k=0}^\infty \frac{f(p^k)}{p^k},
\]
where $\gamma$ is Euler's constant.
\end{theorem}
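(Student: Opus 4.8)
The plan is to study the Dirichlet series $F(s)=\sum_{n\ge1} f(n) n^{-s}$, which by multiplicativity carries the Euler product $F(s)=\prod_p E_p(s)$ with $E_p(s)=\sum_{k\ge0} f(p^k) p^{-ks}$, and to extract its behaviour as $s\to 1^+$ along the real axis. Because the hypotheses give only an \emph{averaged} statement about $f$ on the primes and a one-sided growth bound on prime powers --- in particular no analytic continuation of $F$ past $\Re s = 1$ and no zero-free region --- I would not attempt a contour-shift (Selberg--Delange) argument; instead the natural route is to determine the singular expansion of $F$ at $s=1$ and then invoke a positivity Tauberian theorem, which is precisely where the hypothesis that $f$ is \emph{positive} is used.

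First I would show $F(s)\sim C\,(s-1)^{-\tau}$ as $s\to 1^+$. Writing $\log F(s)=\sum_p f(p)p^{-s} + \sum_p\bigl(\log E_p(s)-f(p)p^{-s}\bigr)$, condition (2) together with the classical $\sum_p p^{-s}\sim\log\frac1{s-1}$ (via Abel summation against the prime-counting data) gives $\sum_p f(p)p^{-s}\sim \tau\log\frac1{s-1}$; condition (1), with $\gamma_2<2$, forces the prime-power tails $\sum_{k\ge2} f(p^k)p^{-ks}$ to converge and, summed over $p$, to contribute a correction that stays bounded and continuous as $s\to1^+$. Exponentiating yields $F(s)=(s-1)^{-\tau}K(s)$ with $K(s)\to C:=\prod_p (1-1/p)^\tau E_p(1)$, the last identity obtained by comparing $F$ against $\zeta(s)^\tau\sim(s-1)^{-\tau}$ factor by factor in the Euler product.

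With the singular expansion in hand I would apply the Hardy--Littlewood--Karamata Tauberian theorem: since $f\ge0$ and $\sum_{n} f(n)n^{-s}\sim C(s-1)^{-\tau}$, one concludes $\sum_{n<x} f(n)\sim \frac{C}{\Gamma(\tau)}\,x(\log x)^{\tau-1}$ (the normalisation is fixed by the model case $f=d_\tau$, $F=\zeta^\tau$, where $\sum_{n<x} d_\tau(n)\sim x(\log x)^{\tau-1}/\Gamma(\tau)$). Finally I would convert the constant into the advertised partial Euler product by Mertens' third theorem $\prod_{p<x}(1-1/p)^\tau\sim e^{-\gamma\tau}(\log x)^{-\tau}$: this turns $\prod_{p<x}E_p(1)$ into $C e^{\gamma\tau}(\log x)^\tau(1+o(1))$, and substituting back reproduces the stated asymptotic $\sim \frac{e^{-\gamma\tau}}{\Gamma(\tau)}\frac{x}{\log x}\prod_{p<x}\sum_k f(p^k)p^{-k}$, the Euler constant entering through Mertens (so with a minus sign in the exponent).

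The hard part will be pinning down the constant $C$ rigorously, because the hypotheses bound $f(p^\nu)$ only for $\nu\ge2$ and say nothing pointwise about $f(p)$; the convergence of $\sum_p\bigl(\log E_p(s)-f(p)p^{-s}\bigr)$ therefore hinges on controlling $\sum_p f(p)^2 p^{-2s}$, which must be wrung out of the averaged estimate (2) rather than from any termwise bound. Relatedly, the Tauberian passage itself is where all the analytic force sits --- it is exactly the step that cannot proceed without positivity, and supplying a self-contained proof (rather than citing Karamata) would amount to reproving the hard half of Wirsing's theorem, e.g.\ through his original real-variable renewal-equation argument.
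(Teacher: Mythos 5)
The paper offers no proof of this statement --- it is quoted verbatim (with attribution) from Wirsing's paper \cite{ewirsing} --- so there is no in-paper argument to compare against; I can only assess your proposal on its merits. Your first stage (the singular expansion $F(1+\epsilon)\sim C\epsilon^{-\tau}$ via the Euler product, condition (2) against $\sum_p p^{-s}\sim\log\frac{1}{s-1}$, and condition (1) to tame the prime-power tails) is the standard and essentially correct opening, and you rightly flag that controlling $\sum_p\bigl(\log E_p(s)-f(p)p^{-s}\bigr)$ from an averaged hypothesis is delicate --- indeed this is why Wirsing's conclusion is stated with the \emph{partial} product $\prod_{p<x}$ rather than a convergent infinite product absorbed into a constant $C$; your reduction to a single constant plus Mertens already presumes more convergence than the hypotheses supply. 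You also correctly observe that the derivation produces $e^{-\gamma\tau}$, which exposes a sign typo in the statement as printed ($\exp(\gamma\tau)$ should be $\exp(-\gamma\tau)$).

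The genuine gap is the Tauberian step. Hardy--Littlewood--Karamata, applied to the positive measure $\sum_n \frac{f(n)}{n}\delta_{\log n}$ (the only way to make $F(1+\epsilon)\sim C\epsilon^{-\tau}$ a Laplace-transform asymptotic at the origin), yields $\sum_{n\le x}f(n)/n\sim \frac{C}{\Gamma(\tau+1)}(\log x)^{\tau}$, i.e.\ the \emph{logarithmically weighted} mean value. It does not yield $\sum_{n\le x}f(n)\sim\frac{C}{\Gamma(\tau)}x(\log x)^{\tau-1}$: partial summation goes from unweighted to weighted, not back, and positivity alone cannot de-smooth (e.g.\ nonnegative $a_n$ supported on powers of $2$ with $\sum a_n n^{-s}\sim C(s-1)^{-1}$ have oscillating, non-asymptotic partial sums). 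Passing from the weighted to the unweighted mean is precisely the hard content of Wirsing's theorem, and it requires using multiplicativity a second time --- his elementary renewal-type identity obtained from $f(n)\log n=\sum_{p^k\mid n}f(n/p^k)\,(\cdots)$, iterated and compared with the weighted asymptotic --- which you mention only parenthetically at the end as an alternative to ``citing Karamata.'' Citing Karamata does not close the argument; the renewal-equation step is not optional, so as written the proposal proves a weaker (logarithmically averaged) statement than the theorem claims.
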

In our case, $f(n) = \tau_k(n),$ and so $f(p^\nu) = k \nu + 1.$ The first hypothesis of Wirsing's theorem clearly holds, while, the second hypothesis holds with $\tau = k+1.$
The terms of the Euler product are 
\[\sum_{\nu=0}^\infty \frac{k\nu+1}{p^\nu}= \frac{p}{p-1}+ k \frac{p}{(p-1)^2} = 1 + \frac{k+1}{p-1} + \frac{1}{(p-1)^2},
\]
so the Selberg-Delange method indicates that the Euler product is asymptotic to $\log^{k+1}x,$ and so the average value of $\tau_k(n)$ for $n \in [1, x]$ is seen to be asymptotic to $\log^k(x).$

Finally, we will need the following result of J.~G.~van~der~Corput \cite{vandercorput,landreauvan}.  
\begin{theorem}
\label{vdc}
that if $P$ is a monic polynomial with integer coefficients, and $s\geq 1$ is an integer, then 
\[
\frac{1}{x}\sum_{n\leq x} \tau^s(P(n) \ll \log^{\Omega}x,
\]
for some $\Omega(P).$
\end{theorem}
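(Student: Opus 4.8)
The plan is to convert the $s$-th power $\tau^s$, which is not itself a divisor sum, into an honest sum over \emph{small} divisors, and then to run the standard ``swap the order of summation'' argument and read off the power of $\log x$ from an Euler product. Note first that the crude bound $\tau(m)\ll_\epsilon m^\epsilon$ of Fact~\ref{epsfact} is useless here: it yields only $\sum_{n\le x}\tau^s(P(n))\ll x^{1+O_s(\epsilon)}$, with a genuine power of $x$ lost, so one must exploit the multiplicative structure of $\tau$ rather than its size. First I would reduce to a single irreducible polynomial. Writing $P=c\prod_i P_i^{a_i}$ with the $P_i$ distinct and irreducible, submultiplicativity of $\tau$ together with $\tau(m^a)\le\tau(m)^a$ gives $\tau(P(n))\le\tau(c)\prod_i\tau(P_i(n))^{a_i}$, hence $\tau^s(P(n))\ll\prod_i\tau^{sa_i}(P_i(n))$; a single application of H\"older's inequality reduces the estimate to bounds of the same shape for each irreducible factor separately (with $s$ replaced by a larger integer). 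So assume $P$ irreducible of degree $g$.

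The crux is the linearization of $\tau^s$, which is provided by Landreau's inequality (the analytic engine behind \cite{landreauvan}): for every integer $k$ and every $\epsilon>0$ there are a constant $C=C(k,\epsilon)$ and an integer $B=B(k)$ with
\[
\tau(m)^{k}\le C\sum_{\substack{d\mid m\\ d\le m^{\epsilon}}}\tau(d)^{B}\qquad(m\ge 1).
\]
The heuristic is that if $m$ has many prime factors then most of them are small, so a single small divisor $d\le m^{\epsilon}$ already captures the bulk of $\tau(m)$, the remaining large prime powers contributing only a bounded factor. Applying this with $k=s$ to $m=P(n)$, and using $P(n)\ll x^{g}$ so that $m^{\epsilon}\le x^{2g\epsilon}$ for $x$ large, turns $\tau^s(P(n))$ into a sum over divisors of size at most $x^{2g\epsilon}$.

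Next I would interchange summation:
\[
\sum_{n\le x}\tau^s(P(n))\ll\sum_{d\le x^{2g\epsilon}}\tau(d)^{B}\,\#\{\,n\le x:d\mid P(n)\,\}.
\]
Writing $\rho(d)=\#\{n\bmod d:P(n)\equiv 0\ (\mathrm{mod}\ d)\}$, which is multiplicative with $\rho(p)\le g$, one has $\#\{n\le x:d\mid P(n)\}=(\rho(d)/d)\,x+O(\rho(d))$. For $\epsilon<1/(2g)$ the error terms sum to $\ll x^{1-\delta}$ and are negligible, leaving the main term $x\sum_{d\le x^{2g\epsilon}}\tau(d)^{B}\rho(d)/d$. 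Since $P$ is irreducible, the Galois group acts transitively on its roots, so by the Frobenius/Chebotarev density theorem together with Burnside's lemma the average of $\rho(p)$ over primes is the number of orbits, namely $1$, whence $\sum_{p\le y}\rho(p)/p\sim\log\log y$. The local Euler factor at $p$ is $1+2^{B}\rho(p)/p+O(p^{-2})$, so the product over $p\le x^{2g\epsilon}$ is $\asymp(\log x)^{2^{B}}$. Recombining the finitely many irreducible factors gives the claimed bound with an effective $\Omega=\Omega(P,s)$.

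The hard part is exactly the first analytic step, the passage from $\tau^s$ to a genuine divisor sum: the interchange of summation that makes the problem tractable is available only for linear divisor sums, and without Landreau's inequality (or van der Corput's original combinatorial substitute) one is stuck. Everything afterwards---the root count $\rho(d)$, the separate but finite treatment of the primes dividing $\operatorname{disc}(P)$ where $\rho(p^j)$ must be bounded directly, and the Euler-product bookkeeping that pins down the exponent $\Omega$---is routine if somewhat tedious.
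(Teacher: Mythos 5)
The paper does not actually prove Theorem~\ref{vdc}; it is quoted as a known result with a citation to van der Corput and to Landreau's paper, and your sketch is a faithful reconstruction of exactly the argument behind that citation: Landreau's linearization $\tau(m)^k \le C\sum_{d\mid m,\, d\le m^{\epsilon}}\tau(d)^{B}$, interchange of summation, the multiplicative root-counting function $\rho(d)$ with $\sum_{p\le y}\rho(p)/p\sim\log\log y$ for irreducible $P$ via Frobenius/Burnside, and an Euler-product bound. The outline is correct; the only imprecisions are cosmetic --- in Landreau's inequality the exponent $B$ depends on $\epsilon$ as well as on $k$ (harmless, since you fix $\epsilon$ in terms of $\deg P$ at the end), and the resulting exponent is $\Omega(P,s)$ rather than $\Omega(P)$, which is in fact a small slip in the paper's own statement.
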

% The average behavior of $\tau_k(n)$ is a considerably less understood. Ramanujan showed (see \cite[page 133-135]{ramanujan1962ramanujan}) that 
% \[
% \frac1x \sum_{n<x}\tau^k(n)\ll \log^{2^k-1} x.\]
% Since it is not hard to see that $\tau(n^k) \ll \tau^k(n),$ we have the same estimate for $\tau_k.$ 
% It is of interest to understand what the truth is - experiments seem to indicate that 
% \[
% \frac1x \sum_{n<x}\tau_(n) \sim C(k) \log x.
% \]
\section{Genericity of large Galois groups}
\label{galgps}
For $k>5,$ we see from the above lemmas that 
the Galois group of $p$ is $k$-transitive if and only if $p_k$ is irreducible. In particular, the Galois group is one of $S_d$ or $A_d$ when $k=6,$ and $p_6$ is irreducible. 

Now, let us assume that $d \geq 12.$

We parallel the arguments in Section \ref{irred}. Pick a general monic polynomial $p$ of degree $d$ with integer coefficients, such that its constant term is $1.$ Then, so is the constant term of $p_k.$ The condition that $p_k$ factor over $\mathbb{Z}$ (with factors $q$ and $r$ of degree $s$ and $t,$ such that $s+t = \deg p_k$) is an overdetermined system, whose solubility is equivalent to vanishing of some resultant polynomial $R(a_2, \dotsc, a_{d-1}).$ If this polynomial is non-zero, we are done (by the argument in Section \ref{irred}). If the polynomial \emph{is} equal to zero, that tells us that the Galois group of a polynomial with constant term $1$ is not $6$-transitive with positive probability (as the height of the coefficients goes to infinity). This contradicts the results of \cite{cohengalois,rivinduke}.

Suppose the constant term of $p$ is not equal to $1.$ We repeat the argument above, with the punchline still being the contradiction with the results of \cite{cohengalois,rivinduke}, since the number of divisors of $p(0)$ is bounded above by $p(0)^\epsilon$ (for \emph{any} positive $\epsilon)$, while the results of \cite{cohengalois} show that the probability that the Galois group is not $6$-transitive is bounded above by $n^c,$ for some $c<0.$

Putting all this together, we finally get
\begin{theorem}
\label{mainthm}
The probability $p(d, N)$ that a monic polynomial of degree $d\geq 12,$ with integer coefficients picked uniformly and independently from $[-N, N]$ has Galois group neither $S_n$ nor $A_n$ is bounded as:
\[
p(d, N) \ll_d \frac{\log^\Omega N}N,
\]
for some $\Omega > 0.$
\end{theorem}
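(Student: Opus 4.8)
The plan is to reduce the whole statement to a single irreducibility estimate for the auxiliary polynomial $p_6$, and then to run the counting argument of Section \ref{irred} with $p$ replaced by $p_6$. First I would record the group-theoretic reduction. Since $d \geq 12$ we have $6 \leq \tfrac12 d$, so taking $k = 6$ the hypotheses of Theorem \ref{lv} are met: every $6$-homogeneous subgroup of $S_d$ is $6$-transitive. By Lemma \ref{khom}, if $p_6$ is irreducible then the Galois group of $p$ is $6$-homogeneous, hence $6$-transitive, hence (by Fact \ref{sixtrans}) equal to $S_d$ or $A_d$. Taking contrapositives, if the Galois group of $p$ is neither $S_d$ nor $A_d$ then $p_6$ is reducible over $\mathbb{Z}$. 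Consequently
\[
p(d, N) \;\leq\; \Pr_{p \in P_d(N)}[\, p_6 \text{ is reducible over } \mathbb{Z}\,],
\]
and it suffices to bound the right-hand side.

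For this I would imitate Section \ref{irred}, using that the coefficients of $p_6$ are fixed polynomials in those of $p$ and that the constant term of $p_6$ equals $a_0^{c}$, where $c = \binom{d}{6}\frac{6}{d}$ and $a_0$ is the constant term of $p$. Fix a degree split $\deg p_6 = s + t$ and a factorization $a_0^{c} = \beta\gamma$ into the would-be constant terms of the two factors $q,r$; the requirement $p_6 = qr$ with these data is an overdetermined polynomial system in the remaining coefficients $a_1, \dots, a_{d-1}$, whose solubility is equivalent to the vanishing of an iterated resultant $R(a_1, \dots, a_{d-1})$. As in Section \ref{irred}, the complexity of $R$ is bounded independently of $\beta, \gamma$, so provided $R \not\equiv 0$ its zero set is a codimension-one subvariety of $\mathbb{A}^{d-1}$, and Lemma \ref{bertrandlem} bounds the number of its integer points of height at most $N$ by $O_d(N^{d-2})$.

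The one step requiring a genuine idea — and the main obstacle — is ruling out $R \equiv 0$. Here I would argue by contradiction exactly as in the discussion preceding this theorem: if $R$ vanished identically for some split, then $p_6$ would factor for \emph{every} $p$ with the prescribed constant term, so the Galois group of such a $p$ would \emph{never} be $6$-transitive; but the theorem of S.~D.~Cohen \cite{cohengalois} (and \cite{rivinduke}) asserts that, along the family with constant term fixed, the Galois group fails to be $6$-transitive only on a set of density $\ll n^{-\delta}$ for some $\delta > 0$ — a contradiction. Hence $R \not\equiv 0$ for each split and each admissible $(\beta,\gamma)$.

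It remains to assemble the bound. Summing over the $O_d(1)$ splits and over the factorizations $a_0^{c} = \beta\gamma$, of which there are $\tau(a_0^{c})$, the number of $p \in P_d(N)$ with $p_6$ reducible and constant term $a_0 \neq 0$ is $\ll_d \tau(a_0^{c})\, N^{d-2}$; the degenerate case $a_0 = 0$ contributes only $O(N^{d-1})$ and is harmless. Summing over $|a_0| \leq N$ and invoking the average order of $\tau_c(n) = \tau(n^{c})$ recorded above, namely $\sum_{n \leq N} \tau(n^{c}) \ll_d N \log^{c} N$ (alternatively van~der~Corput's Theorem \ref{vdc}), yields $\ll_d N^{d-1} \log^{c} N$ bad polynomials out of a total $\asymp N^{d}$, whence
\[
p(d, N) \;\ll_d\; \frac{\log^{c} N}{N}, \qquad c = \binom{d}{6}\frac{6}{d} = \binom{d-1}{5},
\]
which is the asserted estimate with $\Omega = c$, a polynomial in $d$.
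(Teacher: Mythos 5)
Your proposal is correct and follows essentially the same route as the paper: the reduction via Lemma \ref{khom}, Livingstone--Wagner, and Fact \ref{sixtrans} to the reducibility of $p_6$, the resultant/Lang--Weil count from Section \ref{irred}, the contradiction with Cohen's theorem to rule out an identically vanishing resultant, and the divisor-function average to collect the $\log^{\binom{d}{6}\frac{6}{d}}N$ factor. Your write-up is in fact somewhat more explicit than the paper's about the bookkeeping over constant-term factorizations, but the argument is the same.
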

\begin{remark}
As noted above, we can show $\Omega \leq\frac6d \binom{d}{6}.$ 
\end{remark}
The powers of the logarithm above are quite disconcerting, but firstly, they can be reduced considerably asymptotically, and secondly, they may be at least qualitatively realistic.
\section{Degrees of homogeneity}
\label{homosec}
Above, we used $p_6,$ but we could have used $p_k,$ for $k<6.$ If we had, we would have the more precise result:
\begin{theorem}
\label{mainthm2}
The probability that a random  monic polynomial (with coefficients uniformly distributed between $-N$ and $N$ of degree $d$ whose Galois group is \emph{not} one of $A_d$ or $S_d$ is reducible is polylogarithmic in $N.$ Likewise, the probability (under the same conditions) that the Galois group is not $2$- or $3$- homogeneous is polylogarithmic. Same is true with polynomials with some coefficients fixed (under the conditions specified in the statement of Theorem \ref{cohthm}.
\end{theorem}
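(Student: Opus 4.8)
The theorem packages three statements about a uniformly random monic $p$ of degree $d$ with coefficients in $[-N,N]$: that $p$ is reducible, that $\mathrm{Gal}(p)$ is not $2$-homogeneous, and that $\mathrm{Gal}(p)$ is not $3$-homogeneous, each with polylogarithmic probability. The plan is to treat all three uniformly as the statement ``$\Pr[p_k\text{ reducible}]\ll_d \log^{\Omega}N/N$'' for $k=1,2,3$, running the argument of Section~\ref{galgps} with $p_6$ replaced by $p_k$. For $k=1$ there is nothing new, since $\bigwedge^1 M=M$ forces $p_1=p$ and the bound is exactly Section~\ref{irred}. For $k\in\{2,3\}$, the contrapositive of Lemma~\ref{khom} says that if $\mathrm{Gal}(p)$ is not $k$-homogeneous then $p_k$ is reducible, so it suffices to bound $\Pr[p_k\text{ reducible}]$; note that, unlike in Theorem~\ref{mainthm}, this route needs neither the Classification nor Livingstone--Wagner, because we stop at homogeneity rather than passing to transitivity.

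First fix the constant term $a_0=1$, so $p_k$ also has constant term $1$. For each split $s+t=\binom{d}{k}$ the condition $p_k=q\,r$ with $\deg q=s,\ \deg r=t$ is an overdetermined system in the coefficients of $p$ whose solubility is the vanishing of an iterated resultant $R_{s,t}$, and the reducibility locus is the finite union $\bigcup_{s,t}\{R_{s,t}=0\}\subset\mathbb{A}^{d-1}$. The crux is to see this union is \emph{proper}. Here I would invoke the deeper input exactly as in Section~\ref{galgps}: by van~der~Waerden \cite{vdW} some monic $p$ with $a_0=1$ has $\mathrm{Gal}(p)=S_d$, and for any such $p$ Lemma~\ref{kirred} (with $k<d$) makes $p_k$ irreducible; hence the locus omits a point and, $\mathbb{A}^{d-1}$ being irreducible, has codimension $\geq 1$. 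Lemma~\ref{bertrandlem} then bounds its height-$\leq N$ integer points by $\ll_d N^{d-2}$ against $\asymp N^{d-1}$ total, giving probability $\ll_d 1/N$.

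Now let $a_0$ range over $[-N,N]$. The constant term of $p_k$ is $a_0^{m}$ with $m=\binom{d}{k}\frac{k}{d}$, and a factorization pins the constant term $b_0$ of the degree-$s$ factor to a divisor of $a_0^{m}$. For each fixed $a_0$ and each admissible $b_0$ the codimension-one count above supplies $\ll_d N^{d-2}$ polynomials, so summing over the $\tau(a_0^{m})=\tau_m(a_0)$ divisors and then over $a_0$ yields
\[
\#\{p:\ p_k\text{ reducible}\}\ \ll_d\ N^{d-2}\sum_{|a_0|\leq N}\tau_m(a_0)\ \ll_d\ N^{d-1}\log^{m}N,
\]
where the last estimate is the Wirsing computation of Section~\ref{divsec} (or Theorem~\ref{vdc}), and the degenerate case $a_0=0$ is reducible and costs only $O(1/N)$. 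Dividing by the $\asymp N^{d}$ total polynomials,
\[
\Pr[p_k\text{ reducible}]\ \ll_d\ \frac{\log^{m}N}{N},\qquad m=\binom{d}{k}\frac{k}{d},
\]
which is polylogarithmic for each fixed $k$ (for $k=1$ it recovers $O(\log N/N)$); combined with the contrapositive of Lemma~\ref{khom} this gives all three clauses.

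Finally, for the families with some coefficients held fixed I would repeat the argument over the corresponding affine subspace. The only step sensitive to the family is the non-vanishing of the $R_{s,t}$: there I would replace van~der~Waerden by Cohen's theorem \cite{cohengalois} (or \cite{rivinduke} when only $a_0$ is fixed), which under the hypotheses of Theorem~\ref{cohthm} furnishes a member with Galois group $S_d$, hence via Lemma~\ref{kirred} an irreducible $p_k$, hence again a proper reducibility locus. The Lang--Weil count and the divisor sum go through with only the number of free coordinates changed, delivering the same bound. I expect the entire difficulty to sit in this non-vanishing step; everything else is the Section~\ref{irred}/\ref{galgps} machinery re-run at a smaller value of $k$.
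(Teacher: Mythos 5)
Your proposal is correct and follows essentially the same route as the paper, which proves Theorem \ref{mainthm2} simply by observing that the Section \ref{galgps} argument (resultant locus, Lang--Weil via Lemma \ref{bertrandlem}, divisor sum over the constant term, and the contrapositive of Lemma \ref{khom}) goes through verbatim with $p_k$ for $k<6$ in place of $p_6$. One small quibble: to certify that the resultant is not identically zero on the slice $a_0=1$ you cite van der Waerden, whose theorem lets all coefficients vary and so does not directly produce a polynomial in that slice; the paper instead invokes \cite{cohengalois,rivinduke} (as you correctly do for the general fixed-coefficient case).
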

To understand what this means, first, a result of W.~Kantor (\cite{kantorhom}):
\begin{theorem}[\cite{kantorhom}]
Let $G$ be a group $k$-homogeneous but not $k$-transitive on a finite set $\Omega$ of $n$ points, where $n\geq 2 k.$ Then, up to permutation isomorphism, one of the following holds:
\begin{enumerate}
\item $k=2$ and $G\leq A\Gamma L(1, q)$ with $n=q\equiv 3 \mod 4.$
\item $k=3$ and $PSL(2, q) \leq G \leq P\Gamma L(2, q),$ where $n-1=q\equiv 3\mod 4.$
\item $k=3$ and $G=AGL(1, 8), A\Gamma L(1, 8),$ or $A\Gamma L(1, 32).$
\item $k=4$ and $G=PSL(2, 8), P\Gamma L(2, 8),$ or $P\Gamma L(2, 32).$
\end{enumerate}
\end{theorem}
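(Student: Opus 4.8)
The plan is to derive the classification from the Livingstone--Wagner theorem (Theorem \ref{lv}, used in the full strength of \cite{LivWag}) together with the classification of $2$- and $3$-transitive groups. First I would record the reformulation that $G$ is $k$-homogeneous but not $k$-transitive exactly when $G$ is transitive on the $k$-subsets of $\Omega$ while, for one such subset $\Delta$, the setwise stabilizer $\Stab_G(\Delta)$ induces a \emph{proper} subgroup of $\mathrm{Sym}(\Delta)\cong S_k$. The decisive structural input is that Livingstone--Wagner gives, for any $k$-homogeneous $G$ with $n\geq 2k$, that $G$ is already $(k-1)$-transitive, and that $k$-homogeneity forces $k$-transitivity once $k\geq 5$. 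Hence no examples survive for $k\geq 5$, and the whole problem reduces to the three cases $k=2,3,4$, in each of which $G$ is already $(k-1)$-transitive and therefore highly structured.

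For $k=2$ I would argue that a $2$-homogeneous but not $2$-transitive group is a transitive rank-$3$ group whose two nontrivial suborbits are interchanged by the orbital pairing; equivalently, $G$ preserves, and is arc-transitive on, a tournament on the $n$ vertices. A standard analysis of such tournament groups shows the socle must be a regular elementary abelian group, so $G$ is of affine type with $n=q$ a prime power, the point stabilizer embeds in $\Gamma L(1,q)$, and the invariant tournament is the square/nonsquare relation, which is antisymmetric precisely when $-1$ is a nonsquare, i.e.\ $q\equiv 3\pmod 4$. This yields $G\leq A\Gamma L(1,q)$ with $n=q\equiv 3\pmod 4$, which is exactly case (1).

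For $k=3$ and $k=4$, Livingstone--Wagner places $G$ among the $2$-transitive (resp.\ $3$-transitive) groups, and I would then run through the CFSG classification of these families --- entirely in the spirit of the present paper, which already invokes the classification. In the almost simple families the only groups that stay transitive on $k$-subsets after losing $k$-transitivity are $PSL(2,q)\leq G\leq P\Gamma L(2,q)$ acting on the $q+1$ points of the projective line: for $q$ odd, $PSL(2,q)$ is $2$-transitive but not $3$-transitive, and comparing its orbit on ordered triples with its action on unordered triples shows it is $3$-homogeneous exactly when $-1$ is a nonsquare, i.e.\ $q\equiv 3\pmod 4$, giving case (2); the analogous computation on unordered $4$-subsets isolates the three groups $PSL(2,8)$, $P\Gamma L(2,8)$, $P\Gamma L(2,32)$ of case (4), where the field automorphisms promote $3$-transitivity to $4$-homogeneity. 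The sporadic possibilities of case (3), namely $AGL(1,8)$, $A\Gamma L(1,8)$, $A\Gamma L(1,32)$, then fall out from checking the affine $2$-transitive groups of small degree directly.

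The main obstacle is twofold. The genuinely delicate point is the $k=2$ analysis: converting the abstract invariant tournament into the concrete conclusion $G\leq A\Gamma L(1,q)$ requires proving the socle must be regular and abelian, which is where the real group theory lives (this is essentially Kantor's original contribution, obtained there without the full classification). The second, more laborious obstacle is the family-by-family bookkeeping for $k=3,4$: for each $2$- or $3$-transitive group one must compare the number of $G$-orbits on ordered $k$-tuples with the number on $k$-subsets and extract the exact congruence conditions together with the precise short list of small-degree coincidences. None of these computations is conceptually hard, but ensuring that every congruence and every sporadic exception is accounted for correctly is where the effort concentrates.
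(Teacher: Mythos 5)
The paper does not prove this statement at all: it is Kantor's classification, quoted verbatim with the citation \cite{kantorhom}, so there is no internal proof to compare your attempt against. Judged on its own terms, your outline follows what is essentially the genuine route to this theorem: Livingstone--Wagner in its full strength (giving $(k-1)$-transitivity for $n\geq 2k$ and eliminating every case $k\geq 5$), the tournament analysis for $k=2$, and a sweep through the $2$- and $3$-transitive groups for $k=3,4$. Two caveats keep it a sketch rather than a proof. First, for $k=2$ the step you label ``standard analysis'' is in fact the whole theorem: one needs that an arc-transitive automorphism group of a tournament has odd order (any involution would have to reverse some arc), hence is solvable by Feit--Thompson, hence---being primitive because it is $2$-homogeneous---has an elementary abelian regular socle, and then a Huppert-type classification of the solvable linear groups with the relevant orbit structure to force the point stabilizer into $\Gamma L(1,q)$; you correctly flag this as the locus of the real group theory, but it is asserted, not carried out. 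Second, the $k=3,4$ bookkeeping presupposes the complete CFSG lists of $2$- and $3$-transitive groups together with an orbit count for each entry; this is legitimate in the context of this paper (which invokes CFSG elsewhere), though anachronistic relative to Kantor's original pre-CFSG argument, and again it is deferred rather than done. One small slip: you explain case (4) by saying the field automorphisms ``promote $3$-transitivity to $4$-homogeneity,'' but $PSL(2,8)$ appears in that list with no field automorphisms at all---it is $4$-homogeneous but not $4$-transitive on $9$ points by itself---so that heuristic fails for the first of the three groups.
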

Above, $A\Gamma L$ and $P\Gamma L$ stand for affine and projective semi-linear groups, respectively.
As pointed out above, the only $k$-transitive groups (other than $A_n$ and $S_n$) for $k>3$ are sporadic: the five-transitive groups are the Matthieu groups $M_{12}, M_{24},$ while the four-transitive groups which are not five-transitive are $M_{11}$ and $M_{23}.$ So, for degrees other than  than $12, 23, 24,64,1024$ (there are two exceptional groups of order $64$ and one each of the other orders) 
our power of log need to be ``only'' $\frac4d \binom{d}{4}.$
\section{The alternating group}
\label{alt}
The results we have so far do not distinguish between $S_d$ and $A_d.$ In this section, we rectify this problem, by introducing yet another resolvent polynomial (this idea is due to R.~P.~Stauduhar \cite{stauduhar}, and much of our exposition is pilfered directly from \cite{stauduhar}).

Let $F[x_1, \dotsc, x_d]$ be a function of $d$ variables. There is an obvious action of $S_d$ on the set of such functions, by having
\[
\pi(F)[x_1, \dotsc, x_d] = F[x_{\pi(1)}, \dotsc, x_{\pi(d)}].
\]
The set of permutations mapping $F$ to itself is a subgroup $G=\Stab(F)$ of $S_d.$ We say that $F$ \emph{belongs} to $G$ if $G=\Stab(F).$ Furthermore, if $G, H$ are subgroups of $S_d,$ and $K = G\cap H,$ we say that \emph{$F$ belongs to $K$ in $H.$}
\begin{theorem}[\cite{stauduhar}]
For every subgroup $G < S_d,$ there is a function $F \in \mathbb{Z}[x_1, \dotsc, x_d],$ which belongs to $G.$
\end{theorem}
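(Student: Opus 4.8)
The plan is to construct $F$ explicitly as an orbit sum, over $G$, of a single monomial chosen to have \emph{trivial} stabilizer, and then to verify that this symmetrization produces a polynomial whose stabilizer is exactly $G$ and nothing larger.

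First I would choose the monomial
\[
m = x_1 x_2^2 x_3^3 \cdots x_d^d = \prod_{i=1}^d x_i^{\,i},
\]
whose exponents $1, 2, \dots, d$ are pairwise distinct. Using the displayed action, $\pi(m) = \prod_i x_{\pi(i)}^{\,i}$, so the exponent of $x_j$ in $\pi(m)$ is $\pi^{-1}(j)$; thus $\pi(m)=m$ forces $\pi = \mathrm{id}$, and $\Stab(m) = \{e\}$. Consequently the assignment $h \mapsto h(m)$ is injective on $S_d$: if $h_1(m) = h_2(m)$ then $(h_2^{-1}h_1)(m) = m$, so $h_2^{-1}h_1 \in \Stab(m) = \{e\}$ and $h_1 = h_2$. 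Here I use that the given action is a genuine \emph{left} action, i.e. $\sigma(\tau(F)) = (\sigma\tau)(F)$, which is a routine check from the definition.

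Next I would set
\[
F = \sum_{g \in G} g(m),
\]
a sum of $|G|$ \emph{distinct} monomials, each with coefficient $1$; in particular $F \in \mathbb{Z}[x_1,\dots,x_d]$. One inclusion is immediate: for $\pi \in G$ we have $\pi(F) = \sum_{g\in G}(\pi g)(m) = \sum_{g\in G} g(m) = F$, since $\pi g$ ranges over $G$ as $g$ does, so $G \leq \Stab(F)$. For the reverse inclusion, suppose $\pi \in \Stab(F)$. Then $\pi(F) = \sum_{g\in G}(\pi g)(m)$ and $F = \sum_{g \in G} g(m)$ are equal as polynomials. Since distinct group elements yield distinct monomials under $h\mapsto h(m)$, equating the two sets of monomials occurring with coefficient $1$ gives $\{\pi g : g\in G\} = \{g : g \in G\}$, i.e. $\pi G = G$, whence $\pi \in G$. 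Thus $\Stab(F) = G$.

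The only genuine subtlety — and the step I would single out as the \textbf{main obstacle} — is guaranteeing that symmetrizing does not \emph{enlarge} the stabilizer. This is precisely why the orbit sum must be taken over a monomial of trivial stabilizer rather than, say, a linear form: for $\ell = \sum_i t_i x_i$ with distinct coefficients the orbit sum $\sum_{g\in G} g(\ell)$ is again linear and collapses (for instance to a multiple of $x_1 + \cdots + x_d$ when $G$ is transitive), thereby becoming invariant under all of $S_d$. With distinct-exponent monomials no such recombination can occur: the $|G|$ monomials in the orbit sum remain distinct and are merely permuted — never added together — by any $\pi$, so that the equation $\pi(F)=F$ pins $\pi$ down to the coset structure of $G$.
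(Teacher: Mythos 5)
Your construction is exactly the one in the paper: the orbit sum over $G$ of the distinct-exponent monomial $\prod_{i=1}^d x_i^{\,i}$. The paper simply asserts that this $F$ belongs to $G$ without further argument, whereas you supply the (correct) verification that the monomial has trivial stabilizer, hence the $|G|$ summands are distinct and $\Stab(F)$ cannot exceed $G$ — so your proof is the same approach, carried out in full detail.
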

\begin{proof}
Let $F^*(x_1, \dotsc, x_d) = \prod_{i=1}^n x_i^i.$ Then
\[
F=\sum_{\sigma\in G}\sigma(F^*)
\]
belongs to $G.$
\end{proof}
The next construction is that of a resolvent polynomial of a subgroup of $S_d.$ (Stauduhar's version is more general, but we won't need).
\begin{definition}
Let $G$ be a subgroup of $S_d.$ Let $\pi_1, \dotsc, \pi_k$ be representatives of right cosets of $G$ in $S_d,$ and let $p$ be a monic polynomial with integer coefficients, with roots $r_1, r_2, \dotsc, r_d.$ Then the \emph{resolvent of $p$ with respect to $G$} $Q_G(p)$ is defined as 
\[
Q_G(p)(x) = \prod_{i=1}^k (x-\pi_i(F(r_1, \dotsc, r_d))),
\]
where $F$ is a function which belongs to $G.$
\end{definition}
\label{resdef}
It is easy to see that $Q_G(p)$ is invariant under $S_d,$ so its coefficients are polynomials in coefficients of $p.$
The next result we need is half of \cite{stauduhar}[Theorem 5]:
\begin{theorem}
\label{introot}
With notation as in Definition \ref{resdef}, if the Galois group $\Gamma$ of $p$ lies in $G,$ then $F(r_1, \dotsc, r_d)$ is an integer. Similarly, if $\Gamma \subseteq \pi_i G \pi_i^{-1},$ then  $\pi_i(F(r_1, \dotsc, r_d))$ is an integer.
\end{theorem}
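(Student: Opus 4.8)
The plan is to show that $F(r_1, \dots, r_d)$ is simultaneously an algebraic integer and a rational number, from which membership in $\mathbb{Z}$ follows. The integrality half requires no hypothesis on $\Gamma$: since $p$ is monic with integer coefficients, each root $r_i$ is an algebraic integer, and $F$ was constructed with integer coefficients (recall $F = \sum_{\sigma \in G}\sigma(F^*)$ with $F^* = \prod_i x_i^i$), so $F(r_1, \dots, r_d)$ is a $\mathbb{Z}$-polynomial expression in algebraic integers and is therefore itself an algebraic integer, as is $\pi_i(F(r_1,\dots,r_d))$.

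The rationality half is where the hypothesis $\Gamma \subseteq G$ enters. I would fix the labeling of the roots so that $\Gamma \leq S_d$ acts by $\gamma(r_i) = r_{\sigma_\gamma(i)}$, and then compute, for any $\gamma \in \Gamma$, using that $\gamma$ fixes the (integer) coefficients of $F$ together with the definition of the $S_d$-action on functions,
\[
\gamma\bigl(F(r_1, \dots, r_d)\bigr) = F(r_{\sigma_\gamma(1)}, \dots, r_{\sigma_\gamma(d)}) = (\sigma_\gamma F)(r_1, \dots, r_d).
\]
When $\sigma_\gamma \in G = \Stab(F)$ we have $\sigma_\gamma F = F$, so $\gamma$ fixes $F(r_1, \dots, r_d)$; since $\Gamma \subseteq G$ this holds for every $\gamma \in \Gamma$, so the value is fixed by the whole Galois group and hence lies in $\mathbb{Q}$. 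Combined with the previous paragraph, this gives $F(r_1, \dots, r_d) \in \mathbb{Z}$, which is the first assertion.

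For the conjugated statement I would not rerun the argument but reduce to the case just proved. The key bookkeeping observation is that $\Stab(\pi_i(F)) = \pi_i G \pi_i^{-1}$: indeed $\tau$ fixes $\pi_i(F)$ iff $\pi_i^{-1}\tau\pi_i$ fixes $F$, i.e. iff $\pi_i^{-1}\tau\pi_i \in G$, i.e. iff $\tau \in \pi_i G \pi_i^{-1}$. Moreover $\pi_i(F)$ again has integer coefficients, and by the definition of the action one has the identity $\pi_i(F(r_1, \dots, r_d)) = (\pi_i F)(r_1, \dots, r_d)$. Applying the first assertion with the function $\pi_i(F)$ and the group $\pi_i G \pi_i^{-1}$ in place of $F$ and $G$ then yields $\pi_i(F(r_1, \dots, r_d)) \in \mathbb{Z}$ whenever $\Gamma \subseteq \pi_i G \pi_i^{-1}$.

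I expect the argument to be essentially routine Galois theory, so the only genuine care needed is the bookkeeping in the group action: confirming that the prescription $\pi(F)[x_1,\dots,x_d]=F[x_{\pi(1)},\dots,x_{\pi(d)}]$ defines a \emph{left} action, that evaluation at the roots intertwines it with the Galois action exactly as in the displayed identity, and that passing to a coset representative conjugates the stabilizer in the direction $\pi_i G \pi_i^{-1}$. Getting this conjugation direction right, and keeping it consistent with the right-coset convention of Definition \ref{resdef}, is the one spot where a sign-type slip could occur, so I would pin down that convention before writing anything else.
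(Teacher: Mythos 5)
Your proposal is correct and follows exactly the paper's argument: the value is fixed by $\Gamma$ (hence rational) because $\Gamma\subseteq\Stab(F)$, it is an algebraic integer because the $r_i$ are and $F$ has integer coefficients, and the conjugated statement reduces to the first via $\Stab(\pi_i(F))=\pi_i G\pi_i^{-1}$. You simply spell out the steps the paper compresses into ``the second part is immediate,'' and your bookkeeping of the action and the conjugation direction is right.
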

\begin{proof}
Since $\Gamma\subseteq G,$ it follows that $F(r_1, \dotsc, r_d)$ is fixed by $\Gamma,$ so is rational. But it is also an algebraic integer, thus a rational integer. The second part is immediate.
\end{proof}
In the sequel, we only care about the case $G = A_d.$ Since $A_d$ is a normal subgroup of $S_d,$ we see that
\begin{corollary}
\label{adcor}
If $\Gamma(p) \subseteq A_d,$ then $Q_{A_d}(p)$ has an integral root. Put differently, $Q_{A_d}(p)$ has a linear factor with integer coefficients.
\end{corollary}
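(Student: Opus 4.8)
The plan is to read this off almost directly from Theorem \ref{introot}, the only genuine content being the observation that $[S_d:A_d]=2$ forces the resolvent to be a quadratic and lets us use the identity as one of the two coset representatives. First I would record that, since $A_d$ has index $2$ in $S_d$, the resolvent $Q_{A_d}(p)$ of Definition \ref{resdef} is a product over exactly $k=2$ right cosets, hence a monic quadratic
\[
Q_{A_d}(p)(x) = (x - \pi_1(F(r_1, \dots, r_d)))(x - \pi_2(F(r_1, \dots, r_d))),
\]
where we are free to choose $\pi_1 = \mathrm{id}$ as the representative of the coset $A_d$ itself and $\pi_2$ any odd permutation.

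With this choice the first root is simply $F(r_1, \dots, r_d)$. Now I would invoke the first assertion of Theorem \ref{introot}: under the hypothesis $\Gamma(p) \subseteq A_d$, the quantity $F(r_1, \dots, r_d)$ is fixed by the Galois group, hence rational, and being an algebraic integer it is a rational integer. Thus the integer $\rho := F(r_1, \dots, r_d)$ is one of the two roots of $Q_{A_d}(p)$. Normality of $A_d$ enters only to the extent that the two conjugacy conditions $\Gamma \subseteq \pi_i A_d \pi_i^{-1}$ appearing in Theorem \ref{introot} both collapse to $\Gamma \subseteq A_d$, so there is nothing to check concerning the choice of representative.

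Finally I would translate \emph{integral root} into \emph{integer linear factor}. The coefficients of $Q_{A_d}(p)$ are the elementary symmetric functions of $\pi_1(F)$ and $\pi_2(F)$, which are symmetric under the swap of the two cosets and hence integer polynomials in the (integer) coefficients of $p$; therefore $Q_{A_d}(p) \in \mathbb{Z}[x]$ and is monic. Dividing out the integer root $\rho$ yields $Q_{A_d}(p)(x) = (x - \rho)(x - \rho')$ with $x - \rho \in \mathbb{Z}[x]$, and the cofactor automatically lies in $\mathbb{Z}[x]$ as well (by Gauss's lemma, or simply because $\rho + \rho'$ and $\rho\rho'$ are integers and $\rho$ is). I do not anticipate a real obstacle here, as the whole argument is a direct specialization of Theorem \ref{introot} to the index-two case; the only point needing a moment's care is this last step, namely confirming that an integral root of the monic integer polynomial $Q_{A_d}(p)$ genuinely splits off a linear factor over $\mathbb{Z}$, which is routine.
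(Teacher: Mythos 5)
Your argument is correct and is essentially the paper's own: the corollary is read off directly from Theorem \ref{introot} by taking the identity as a coset representative, with normality of $A_d$ serving only to make the conjugacy conditions collapse. The extra care you take over the index-two structure and the passage from an integral root to an integer linear factor is harmless elaboration of what the paper leaves implicit.
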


Then, Theorem \ref{vdc} together with the argument we used several times above finally gives us:
\begin{theorem}
\label{altthm}
The number $A_{N, d}$ of monic polynomials of degree $d$ with integer coefficients chosen uniformly from $[-N, N]$ is bounded as follows:
\[
A_{N, d} \ll N^{d-1} \log^{\Omega(d)} N\]. 
Put different, the probability that a random monic polynomial with coefficients as above has alternating group $A_d$ is bounded above by $\log^{\Omega(d)}N/N.$
\end{theorem}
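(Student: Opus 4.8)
The plan is to use Corollary \ref{adcor} to convert the group-theoretic hypothesis into an arithmetic one, and then to count. If $\Gamma(p) \subseteq A_d$ then $Q_{A_d}(p)$ has an integer root; since $A_d$ has index $2$ in $S_d$, the resolvent is a quadratic, and taking $F$ to be the Vandermonde $\prod_{i<j}(x_i - x_j)$ (which belongs to $A_d$) we may write $Q_{A_d}(p)(x) = x^2 - \Delta(p)$, where $\Delta(p)$ is the discriminant of $p$. An integer root exists precisely when $\Delta(p)$ is a perfect square, so the event to be bounded is contained in $\{\,\Delta(p) = \square\,\}$. First I would dispose of the degenerate locus $\Delta(p) = 0$: this is a genuine hypersurface in the $(a_0, \dots, a_{d-1})$ coordinates, so by Lemma \ref{bertrandlem} it carries only $\ll_d N^{d-1}$ integer points, already absorbed into the claimed bound. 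It remains to count the $p$ with $\Delta(p) = y^2$ for some nonzero integer $y$.

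Next I would set up the count exactly as in Sections \ref{irred} and \ref{galgps}: fix the coefficients $a_1, \dots, a_{d-1}$ (there are $\ll N^{d-1}$ choices) and regard $F(t) := \Delta(p)$ as a one-variable polynomial in the remaining coefficient $t = a_0$. Using $\Delta(p) = \pm d^d \prod_{\rho} p(\rho)$, with the product over the roots $\rho$ of $p'$, one sees that $F$ has degree $d-1$ and leading coefficient $\pm d^d \neq 0$. I would then check that, for generic $a_1, \dots, a_{d-1}$, the polynomial $F$ is not a perfect square in $\mathbb{Q}[t]$; equivalently, that $\Delta$ is not a square in $\mathbb{Q}[a_0, \dots, a_{d-1}]$, so that $x^2 - \Delta(a)$ is irreducible over the generic function field and the event ``$\Delta$ a square'' is genuinely thin rather than automatic. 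The goal is then to bound, on average over the slices, the number of integers $t \in [-N, N]$ for which $F(t)$ is a perfect square.

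The tool for this last count is van der Corput's estimate (Theorem \ref{vdc}). The model case $d = 2$, where $\Delta = a_1^2 - 4 a_0$, already exhibits the mechanism: the equation $\Delta = y^2$ reads $(a_1 - y)(a_1 + y) = 4 a_0$, so for each fixed $a_0$ the number of admissible $(a_1, y)$ is at most $\tau(|4 a_0|)$, and summing the divisor function over $a_0 \in [-N, N]$ gives $\ll N \log N$ by Theorem \ref{dirichlet}. In general I would factor $F$ over $\mathbb{Q}$ into coprime pieces and use that a product of coprime integers is a square only when each factor is a bounded divisor times a square; this expresses the condition $F(t) = y^2$ through $O(1)$ square-classes and reduces the slice count to one-variable sums of the shape $\sum_{t \le N} \tau^s(G(t))$ for fixed polynomials $G$, each bounded by $N \log^{\Omega} N$ via Theorem \ref{vdc}. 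This divisor bookkeeping is the exact analogue of the summation over divisors of the constant term in Section \ref{irred}. Assembling the slices multiplies by $N^{d-1}$ and yields $A_{N, d} \ll N^{d-1} \log^{\Omega(d)} N$, hence the stated probability bound.

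The step I expect to be the main obstacle is precisely the square-value count. Unlike the reducibility arguments of Sections \ref{irred} and \ref{galgps}, the condition $\Delta(a) = y^2$ is not a single codimension-one algebraic condition on the coefficients (since $x^2 - \Delta(a)$ is irreducible over $\mathbb{Q}(a)$), so one cannot simply invoke Lemma \ref{bertrandlem} together with a bounded divisor sum. Moreover the auxiliary variable $y$ ranges up to $\sim N^{(d-1)/2}$, so a naive fiber-by-fiber application of Lang--Weil summed over $y$ overcounts catastrophically. The whole point is that perfect-square values of $\Delta$ are rare on average, a statement of exactly the divisor-sum type controlled by Theorem \ref{vdc}; making the reduction from ``$F(t)$ is a square'' to a van der Corput divisor sum both uniform in the slice and summable over the family is where the real work lies.
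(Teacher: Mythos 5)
Your strategy is the same as the paper's: invoke Corollary \ref{adcor} to reduce to the condition that $Q_{A_d}(p)=x^2-\Delta(p)$ has an integer root, i.e.\ that the discriminant is a perfect square, then slice over $a_1,\dots,a_{d-1}$ and try to control the square values of the one-variable polynomial $F(t)=\Delta(t,a_1,\dots,a_{d-1})$ via the divisor-sum estimate of Theorem \ref{vdc}. You work this out in considerably more detail than the paper does (the paper's proof is the single sentence that Theorem \ref{vdc} ``together with the argument we used several times above'' gives the result), and your identification of the resolvent with $x^2-\Delta$, your disposal of the locus $\Delta=0$ by Lemma \ref{bertrandlem}, and your computation of the degree and leading coefficient of $F$ in $t=a_0$ are all correct.

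However, the step you flag as the main obstacle is a genuine gap, and your proposed route around it does not close it. The mechanism of Sections \ref{irred} and \ref{galgps} decouples because the divisor condition constrains one coordinate while the codimension-one condition constrains the rest; here $\Delta$ depends nonlinearly on every coefficient once $d\ge 3$, so the $d=2$ model (where $\Delta=a_1^2-4a_0$ is linear in $a_0$) does not generalize. More importantly, the coprime-factorization reduction does not yield a polylogarithmic bound per slice: if $F$ acquires a linear factor $at+b$ over $\mathbb{Q}$, the number of $t\in[-N,N]$ with $at+b$ lying in a fixed square class is of order $\sqrt{N}$, not $\log^{O(1)}N$; and if $F$ is irreducible of degree at least $3$, counting $t$ with $F(t)=e y^2$ is counting integral points on a hyperelliptic curve, uniformly in the slice, which is not a statement of van der Corput divisor-sum type at all. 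So the condition ``$F(t)$ is a perfect square'' cannot be converted into sums $\sum_{t\le N}\tau^s(G(t))$ in the way you suggest. In fairness, the paper supplies no more detail at exactly this point, so the missing reduction is not something you could have extracted from ``the argument we used several times above''; but as it stands your argument, like the paper's, does not establish the polylog-per-slice bound that the stated estimate $A_{N,d}\ll N^{d-1}\log^{\Omega(d)}N$ requires.
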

\begin{remark} The argument in \cite{landreauvan} is quite effective, and shows that $\Omega(d)$ is quadratic in $d.$
\end{remark}
\subsection{Another way to the full estimate, getting rid of degree dependence, and caveats.}
\label{shortcut}
In fact, there is nothing in the argument in this section which is special to the alternating group. It could be used for any maximal subgroup $M$ of $S_d,$ where Theorem \ref{introot} would tell us that if $Q_M(p)$ has no integer root, then $\Gamma(p)$ does not lie in a conjugate of $M.$ Since there is a finite number of maximal subgroups of $S_d,$ and that number only depends on $d$ (such subgroups are more-or-less classified in \cite{liebeck1987classification}, using CFSG, but this is not important for our purposes), we will get the same result as Theorem \ref{cohthm}, with the proviso that $f(d)$ grows superexponentially in $d$ (since the largest index of a maximal subgroup of $S_d$ is superexponential in $d$). However, we \emph{can} use this method to also get rid of the hypothesis in Theorem \ref{mainthm} that the degree of our polynomial has to be at least $12.$ To indicate the price we pay, note that the maximal degree of $Q_M(p)$ for $p$ of degree $11$ is $19958400.$
\section{The final version}
\label{cohsec}
Finally, the strongest result we can claim is:
\begin{theorem}
\label{cohthm}
Consider the set of all monic polynomials with all but $r\geq 2$ coefficients fixed, and the remaining $r$ coefficients picked uniformly at random from $[-N, N],$as long as the fixed coefficients do not force the polynomial to be reducible (fixing the constant term at $0$ would do that) or to have the form $f(x^k),$ for some $k>1.$ Then, the probability that such a polynomial has Galois group other than $S_d$ is bounded above by $\log^{f(d)}N/N,$ for an effectively computable function $f(d),$ which grows polynomially in $d.$
\end{theorem}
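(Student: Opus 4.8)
The plan is to run the two resolvent arguments already in hand---the $p_6$ construction of Section~\ref{galgps} and the Stauduhar resolvent $Q_{A_d}$ of Section~\ref{alt}---but over the space $\mathbb{A}^r$ of the $r$ free coefficients rather than over the full coefficient space. Writing $a_{j_1},\dots,a_{j_r}$ for the varying coefficients and assuming first that $d\geq 12$, I would bound the probability that $\Gamma(p)\neq S_d$ by a union bound over two events: that the Galois group fails to be $6$-transitive, and that it is $6$-transitive but equals $A_d$. By Lemmas~\ref{kirred} and~\ref{khom} together with Theorem~\ref{lv}, for $d\geq 12$ the group is $6$-transitive precisely when $p_6$ is irreducible over $\mathbb{Z}$, and by Fact~\ref{sixtrans} this forces $S_d$ or $A_d$; so the first event lies in $\{p_6\ \text{reducible over }\mathbb{Z}\}$, while by Corollary~\ref{adcor} the second lies in $\{Q_{A_d}(p)\ \text{has an integer root}\}$. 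It then suffices to count integer points of height $\leq N$ on each locus inside $\mathbb{A}^r$. The finitely many degrees $d<12$ are handled separately by the maximal-subgroup variant of Section~\ref{shortcut}, each contributing only a constant power of $\log N$.

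For the first locus I would argue as in Section~\ref{irred}: for each factorization type $(s,t)$ with $s+t=\deg p_6$, solubility of the factorization system is the vanishing of an iterated resultant $R_{s,t}$, now viewed as a polynomial in $a_{j_1},\dots,a_{j_r}$ after substituting the fixed coefficients. The non-degeneracy input is Cohen's theorem \cite{cohengalois} (or \cite{rivinduke} when only $a_0$ is fixed): under the stated hypotheses the restricted family \emph{contains} polynomials with Galois group $S_d$, for which $p_6$ is irreducible, so each $R_{s,t}$ is not identically zero on the fixed-coefficient locus. Hence each $\{R_{s,t}=0\}$ is a proper subvariety of $\mathbb{A}^r$ of complexity bounded in terms of $d$, and Lemma~\ref{bertrandlem} gives $\ll_d N^{r-1}$ integer points of height $\leq N$. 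When $a_0$ is itself free, I would sum over the factorizations of the constant term of $p_6$, namely a fixed power of $a_0$; averaging over $a_0\in[-N,N]$ this contributes a power of $\log N$ via the average of $\tau_k$ (equivalently Fact~\ref{epsfact}).

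For the second locus I would run the proof of Theorem~\ref{altthm}: an integer root $m$ of $Q_{A_d}(p)$ must divide the constant term of $Q_{A_d}(p)$, which is a fixed power of $a_0$, so $m$ ranges over divisors of that power. For each such $m$ the relation $Q_{A_d}(p)(m)=0$ is a single nontrivial polynomial equation in the free coefficients---nontrivial again by Cohen's theorem---cutting out a proper subvariety with $\ll_d N^{r-1}$ points by Lemma~\ref{bertrandlem}. Summing over the divisors $m$ and averaging over $a_0\in[-N,N]$, the total is controlled by van~der~Corput's bound $\frac1N\sum_{n\leq N}\tau^s(P(n))\ll\log^{\Omega}N$ of Theorem~\ref{vdc}, which supplies the polylogarithmic factor. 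Dividing the two contributions by the total count $\asymp N^r$ and collecting exponents---the $\tfrac6d\binom{d}{6}$ from the $p_6$ step and the quadratic-in-$d$ exponent from the van~der~Corput step---yields $\log^{f(d)}N/N$ with $f(d)$ polynomial in $d$.

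The main obstacle is exactly this non-degeneracy step: after the fixed coefficients are substituted, one must know that neither the resultants $R_{s,t}$ nor the equations $Q_{A_d}(p)(m)=0$ degenerate to the zero polynomial on $\mathbb{A}^r$. This is where Cohen's theorem is indispensable---it certifies that an $S_d$-polynomial survives inside the restricted family---and it is also why the hypotheses excluding reducibility-forcing fixings and the form $f(x^k)$ cannot be dropped. A secondary, purely bookkeeping obstacle is uniformity: the complexity bounds feeding Lemma~\ref{bertrandlem} and the implied constants must be taken uniformly over the finitely many factorization types, coset representatives, and admissible roots $m$, so that the resulting $f(d)$ depends only on $d$ and grows polynomially.
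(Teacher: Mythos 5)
Your proposal is correct and follows exactly the route the paper takes: its proof of Theorem~\ref{cohthm} is literally the one-line remark that the argument is identical to those of Theorems~\ref{mainthm} and~\ref{altthm} with Cohen's theorem supplying the non-degeneracy of the resultant loci on the restricted coefficient space, which is precisely the plan you spell out (including the treatment of small degrees via the maximal-subgroup resolvents of Section~\ref{shortcut}). Your write-up is in fact more detailed than the paper's own.
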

\begin{proof}
The proof is identical to the proofs of Theorems \ref{mainthm} and \ref{altthm}, using S.~D.~Cohen's result \cite{cohengalois}.
\end{proof}
\section{What is the truth?}
\label{truth}
For the question of irreducibility, it is known, through a somewhat technical argument of G. Kuba \cite{kuba}, that the probability that a random polynomial of degree $d>2$ (\emph{not necessarily monic}) with coefficients bounded in absolute value by $N$ is irreducible decreases linearly in $N$ (and this is obviously sharp). The author believes that the same statement holds for Galois groups - that is the probability that the Galois group is different than $S_n$ decreases linearly with $N.$ Further, the probability that the Galois group is the alternating group (or a subgroup thereof) decreases at the speed of $N^{d/2-1}.$ The latter conjecture is motivated by the observation that the Galois group of a polynomial $p$ is a subgroup of the alternating group if and only if the discriminant is a perfect square. The discriminant is a polynomial of degree $d-2,$ so its values are bounded by $O(N^{d-2})$ in absolute value. Of numbers of that size, roughly $O(N^{d/2-1})$ are squares. 
%, but we will see that these can be gotten rid of entirely.
% \section{The Truth}
% We now replace the soft Lang-Weil estimate (the estimate we use is Lemma 1 in Lang-Weil, and not the point of their paper) with the following much sharper estimate due to Jonathan Pila:
% \begin{theorem}[\cite{pila1,pila2}]
% \label{pilathm}
% Let $V$ be an irreducible affine variety of dimension $n$ and degree $d,$ embedded in $N$-dimensional affine space. There is a constant $c(n, N, d),$ such that the number $N_H(V)$ of integral points of height bounded above by $H$ on $V$ is bounded above as
% \[
% N_H(V) \ll c(n, N, d) H^{n-1 + 1/d} \log^{2d+3} H.
% \]
% \end{theorem} The other result we need is a fundamental fact to be found in \cite{gkz}:
% \begin{fact}
% \label{gkzfact}
% The resultant of a a collection of polynomials is geometrically irreducible.
% \end{fact}
% Putting these two pieces together, and noting that we had previously only used the Lang-Weil bound for resultants, we get the following results:
% \begin{theorem}
% \label{mainthm2}
% The probability that a monic polynomial whose (nonleading) coefficients are all chosen uniformly  at random from $[-N, N]$ is reducible over $\mathbb{Q}$ is asymptotic to $\frac1{2N+1};$ if we condition on the constant term being nonzero, the probability of being reducible is $o(1/N).$
% The same statements as above can be made if we replace "reducible" by "having Galois group not equal to either $A_d$ or $S_d,$ and require $d\geq 12.$
% \end{theorem}
\bibliographystyle{plain}
\bibliography{galois}
\end{document}